\newcommand{\ev}{\mathop{\mathbf{ev}}\nolimits}
\newcommand{\F}{\mathbb{F}}
\newcommand{\GlnF}{\mathop{{\rm GL}(n,\F)}\nolimits}
\newcommand{\GlF}{\mathop{{\rm GL}(3,\F)}\nolimits}
\newcounter{tbn}
\newcounter{pict}
\newcounter{mcasenum}
\newtheorem{theorem}{Theorem}[section]
\newtheorem{lemma}[theorem]{Lemma}
\newtheorem*{lemma*}{Lemma}
\newtheorem{corollary}[theorem]{Corollary}
\newtheorem*{corollary*}{Corollary}
\newtheorem*{proposition*}{Proposition}
\theoremstyle{definition} 
\newtheorem*{definition*}{Definition}
\newtheorem*{example*}{Example}
\newtheorem*{examples*}{Exampes}
\newtheorem{remark}[theorem]{Remark}
\newtheorem*{remark*}{Remark}
\newtheorem*{note*}{Note}
\newtheorem{result}[theorem]{Result}
\newtheorem*{result*}{Result}
\newtheorem*{comment*}{Comment}
\begin{document}

\title{\bf Degenerations of 3-dimensional nilpotent associative algebras over an algebraically closed field
}


\author{N. M. Ivanova%
\thanks{
\textit{\noindent Institute of Mathematics of NAS of Ukraine,~3 Tereshchenkivska Str., 01601 Kyiv, Ukraine and 
European University of Cyprus, Nicosia, Cyprus.}
{E-mail: ivanova.nataliya@gmail.com}
}
\ and\ \
C. A. Pallikaros%
\thanks{%
\textit{Department of Mathematics and Statistics,
University of Cyprus,
P.O.Box 20537,
1678 Nicosia,
Cyprus.}
{E-mail: pallikaros.christos@ucy.ac.cy}
}
}
\date{August 16, 2024}

\maketitle

\begin{abstract}
We determine the complete degeneration picture inside the variety of nilpotent associative algebras of dimension 3 over an algebraically closed field.
Comparing with the discussion in~\cite{IvanovaPallikaros2022},
for some of the arguments in the present article we needed to develop alternative techniques which are now valid over an arbitrary algebraically closed field.
There is a dichotomy of cases concerning the results obtained, corresponding to whether the characteristic of the field is 2 or not.
\\[1.5ex]
Key words:   degeneration; orbit closure; nilpotent associative algebra
\\
2020 MSC Classification: 14R20; 14D06
\end{abstract}



\section{Introduction}

The notion of degeneration or contraction arises in various physical investigations.
It was first introduced by Segal~\cite{Segal1951} and In\"on\"u and Wigner~\cite{InonuWigner1953,InonuWigner1954}.
They showed that certain Lie algebras can be related to one another through a kind of limiting process (degeneration).
The degeneration process demonstrated in~\cite{Segal1951,InonuWigner1953,InonuWigner1954} was useful in the direction of understanding  the relationship between different Lie algebras and their representations.
This provided a bridge between seemingly unrelated mathematical structures and motivated the exploration of connections between different areas of mathematics.

There are many works devoted to determining the degenerations within various classes of algebras, mainly over the fields $\mathbb C$ and $\mathbb R$ --- see, for example,~\cite{Agaoka1999,Burde&Steinhoff1999,NesterenkoPopovych2006} for the cases of 3- and 4-dimensional Lie algebras.
For various investigations related to degenerations of algebras over an arbitrary field see, for example,~\cite{IvanovaPallikaros2019,IvanovaPallikaros2019b,PallikarosWard2020,Ward2023}.
The present paper is concerned with the degenerations of 3-dimensional nilpotent associative algebras over an algebraically closed field.
The classification of 3-dimensional nilpotent associative algebras over an arbitrary field can be found in~\cite{KrusePrice1969} or~\cite{deGraaf2018}.
Based on this classification we determine the complete degeneration picture within this class of algebras over an algebraically closed field.
We thus generalize the part of the degeneration picture obtained in~\cite{IvanovaPallikaros2022} concerning nilpotent associative algebras over the field of complex numbers.
In this paper, we needed to employ alternative techniques, especially for degenerations to and from the infinite family in the above classification, which are now valid over an arbitrary algebraically closed field and not just over~$\mathbb C$.

It turns out that there is a natural split between the cases ${\rm char}\,\F\ne2$ and ${\rm char}\,\F=2$, giving rise to different degeneration pictures for these two cases.

\section{Preliminaries}\label{sec:Prilim}

We suppose that $\F$ is an arbitrary infinite field and $n$ a positive integer.
We also let $V$ be an $\F$-vector space with $\dim_\F V=n$ and fix an ordered $\F$-basis $(e_1,\ldots,e_n)$ of $V$ which we will refer to as the standard basis of $V$.

An algebra structure $\mathfrak g$ on $V$ is an $\F$-algebra having $V$ as its underlying vector space, in particular the multiplication in $\mathfrak g$ is defined via a suitable $\F$-bilinear map $[,]_{\mathfrak g}\colon V\times V\to V$.
We denote by $\boldsymbol A$ the set of all algebra structures on $V$ and, for $\mathfrak g, \mathfrak h\in \boldsymbol A$, by $\mathfrak g\simeq\mathfrak h$ we mean that $\mathfrak g$ and $\mathfrak h$ are isomorphic as $\F$-algebras.
Writing, for simplicity, $xy$ in the place of $[x,y]_{\mathfrak g}$ for $x,y\in V$, an algebra $\mathfrak g\in\boldsymbol A$ is called associative if $(uv)w=u(vw)$ for all $u,v,w\in V$.
It is called nilpotent if, for some positive integer $r$, the product $(${}{\tiny$\cdots$}$((w_1w_2)w_3)\ldots)w_{r+1}=0_V$ for all $w_1,\ldots,w_{r+1}\in V$ (in this article we call the least such $r$ the nilpotency class of the algebra).

If now $(u_1,\ldots,u_n)$ is any ordered $\F$-basis of $V$, the multiplication in $\mathfrak g\in\boldsymbol A$ is completely determined by the structure constants $\gamma_{ijk}\in\F$ ($1\le i,j,k\le n$), given by $[u_i,u_j]_{\mathfrak g}=\sum_{k=1}^n\gamma_{ijk}u_k$.
It will be convenient to regard this set of structure constants $\gamma_{ijk}$ as an ordered $n^3$-tuple (via a suitable ordering of the ordered triples $(i,j,k)$ for $1\le i,j,k\le n$).
In such a case we call the ordered $n^3$-tuple $\boldsymbol\gamma=(\gamma_{ijk})$ the structure vector of $\mathfrak g\in\boldsymbol A$ relative to the basis $(u_1,\ldots,u_n)$ of $V$.
We denote by $\boldsymbol\Lambda$ the $\F$-vector space formed, via the usual (componentwise) addition and scalar multiplication, by all possible structure vectors of elements of $\boldsymbol A$.
Clearly, $\boldsymbol\Lambda=\F^{n^3}$ as sets.
Moreover, an element $\boldsymbol\lambda\in\boldsymbol\Lambda$ occurs as a structure vector for both algebras $\mathfrak g, \mathfrak h$ in $\boldsymbol A$ exactly when $\mathfrak g\simeq\mathfrak h$.
We can obtain a natural bijection $\Theta\colon\boldsymbol A\to\boldsymbol\Lambda$ by defining, for $\mathfrak g\in\boldsymbol A$, the element $\Theta(\mathfrak g)$ of $\boldsymbol\Lambda$ to be the structure vector of $\mathfrak g$ relative to the standard basis $(e_1,\ldots, e_n)$ of $V$ we have fixed above.
We will use symbol ${\bf abc}$ for the element $\boldsymbol\lambda\,(=(\lambda_{ijk}))$ of $\boldsymbol\Lambda$ with $\lambda_{abc}=1_\F$ and with all other $\lambda_{ijk}$ equal to $0_\F$.
We will refer to the basis $\{{\bf ijk}\colon 1\le i,j,k\le n\}$ of $\boldsymbol\Lambda$ as the standard basis of~$\boldsymbol\Lambda$.

We will be working with the Zariski topology on the set $\boldsymbol\Lambda$, so at this point we recall some basic facts on algebraic sets.
Let $\F[{\bf X}]$ be the ring $\F[X_{ijk}: 1\le i,j,k\le n]$ of polynomials in the indeterminates $X_{ijk}$ ($1\le i,j,k\le n$) over $\F$.
A subset $W$ of $\boldsymbol\Lambda$ is called algebraic (or Zariski-closed) if there exists a subset $S\subseteq \F[{\bf X}]$ such that $W=\{\boldsymbol\lambda=(\lambda_{ijk})\in\boldsymbol\Lambda\colon \ev_{\boldsymbol\lambda}(f)=0_{\F}$ for all $f\in S\}$ where, for $\boldsymbol\lambda=(\lambda_{ijk})\in\boldsymbol\Lambda$, we denote by $\ev_{\boldsymbol\lambda}$ the evaluation homomorphism $\F[{\bf X}]\to\F$ (which is the identity on $\F$ and sends $X_{ijk}$ to $\lambda_{ijk}$ for $1\le i,j,k\le n$).
The vanishing ideal of a subset $U$ of $\boldsymbol\Lambda$ is defined by ${\bf I}(U)=\{f\in\F[{\bf X}]\colon \ev_{\boldsymbol\lambda}(f)=0_{\F}$ for all $\boldsymbol\lambda\in U\}$.
The Zariski closure of $U$ will be denoted by $\overline{U}$.
Also note that in a similar way we can define algebraic subsets of $\F^r$ with $r$ an arbitrary positive integer.

Let $G=\GlnF$ be the general linear group of degree $n$ over $\F$, that is $G=\{g\,(=(g_{ij}))\in\F^{n^2}\colon \det g\ne0_\F\}$.
We can identify $G$ with the Zariski-closed subset of points $(g_{ij},b)$ of $\F^{n^2+1}$ satisfying $b\det(g_{ij})=1_\F$, and thus consider $G$ as an affine algebraic group.
Thus, the algebra of regular functions on $G$ is the $\F$-algebra $\F[t_{ij}\, (1\le i,j\le n),\, d^{-1}]$ of polynomial maps $G\to\F$, where the $t_{ij}$ are the coordinate functions on $G$ (that is, for $g=(g_{ij})\in G$ we have $t_{ij}(g)=g_{ij}$), and $d=\det(t_{ij})$.

There is a natural linear action of $G$ on $\boldsymbol\Lambda$ ``by change of basis'' which gives $\boldsymbol\Lambda$ the structure of an algebraic $G$-module:
Consider the map $\phi\colon\boldsymbol\Lambda\times G\to\boldsymbol\Lambda:$ $(\boldsymbol\lambda,g)\mapsto\boldsymbol\lambda g$, ($\boldsymbol\lambda\in\boldsymbol\Lambda$, $g=(g_{ij})\in G$), where we define $\boldsymbol\lambda g\in\boldsymbol\Lambda$ to be the structure vector of $\Theta^{-1}(\boldsymbol\lambda)\in\boldsymbol A$ relative to the ordered $\F$-basis $(v_1,\ldots, v_n)$ of $V$ given by $v_j=\sum_{i=1}^n g_{ij}e_i$ for $1\le j\le n$ (in other words $g\in G$ is taken to be the transition matrix from the basis $(e_i)_{i=1}^n$ to the basis $(v_i)_{i=1}^n$ of $V$).
It is easy to check that $\phi$ indeed defines a linear (right) action of $G$ on $\boldsymbol\Lambda$ satisfying ${\bf ijk}\,g=\sum_{a,b,c}g_{ia}g_{jb}\hat g_{ck}\,{\bf abc}$, where we denote by $\hat g_{ij}$ the $(i,j)$-entry of matrix $g^{-1}$.
In fact, if we use the ``lexicographic'' ordering for the standard basis elements ${\bf ijk}$ of $\boldsymbol\Lambda$ and the natural identification between the elements of $\boldsymbol\Lambda$ and $(1\times n^3)$-matrices with entries in $\F$, we see that, for each $g\in G$, the map $\phi_g\colon\boldsymbol\Lambda\to\boldsymbol\Lambda:$ $\boldsymbol\lambda\mapsto\boldsymbol\lambda g$ is the $\F$-linear map given by post-multiplication of $\boldsymbol\lambda$ by the matrix $g\otimes(g\otimes(g^{-1})^{\rm tr})$.
Since the coefficients of this last matrix induce regular functions on $G$, this ensures that we can regard $\boldsymbol\Lambda$ as a $G$-variety via the action defined by $\phi$ (compare with the discussion in~\cite[Sections~3.1, 3.2]{Geck2003}).

Given $\boldsymbol\lambda\in\boldsymbol\Lambda$, we will be considering the $H$-orbit $\boldsymbol\lambda H\,(=\{\boldsymbol\lambda g\colon g\in H\})$ for some particular subgroups $H$ of $G$, including $G$ itself.
We will write $\boldsymbol\lambda G=O(\boldsymbol\lambda)$.
Since, for each $g\in G$, the map $\phi_g$ from $\boldsymbol\Lambda$ to $\boldsymbol\Lambda$ sending $\boldsymbol\lambda$ to $\boldsymbol\lambda g$ is regular, and hence continuous in the Zariski topology, we get (compare, for example, with~\cite[Lemma~3.1]{IvanovaPallikaros2019} and its proof):

\begin{result}\label{Result2.1}
Let $\boldsymbol\lambda\in\boldsymbol\Lambda$ and let $H$ be a subgroup of $G$.
Then $\overline{\boldsymbol\lambda H}$ is a union of $H$-orbits (and hence a $H$-invariant subset of $\boldsymbol\Lambda$).
\end{result}

In particular, if $\boldsymbol\lambda,\boldsymbol\mu\in\boldsymbol\Lambda$ with $\boldsymbol\mu\in O(\boldsymbol\lambda)$ then $O(\boldsymbol\mu)\subseteq\overline{O(\boldsymbol\lambda)}$.
This, together with the easy observation that the $G$-orbits of the action defined by $\phi$ correspond, via the map $\Theta$, precisely to the isomorphism classes of $n$-dimensional $\F$-algebras, lead to the following definition.

\begin{definition*}
Let $\mathfrak g,\mathfrak h\in\boldsymbol A$.
We say that $\mathfrak g$ degenerates to $\mathfrak{h}$ (respectively, $\mathfrak{g}$ properly degenerates to $\mathfrak{h}$) if $\Theta(\mathfrak h) \in\overline{O(\Theta(\mathfrak g) )}$
(respectively, $\Theta(\mathfrak h) \in\overline{O(\Theta(\mathfrak g) )}- O(\Theta(\mathfrak g) )$).
For $\boldsymbol\lambda,\boldsymbol\mu\in\boldsymbol\Lambda$ we  write $\boldsymbol\lambda\to\boldsymbol\mu$ (or $\Theta^{-1}(\boldsymbol\lambda)\to\Theta^{-1}(\boldsymbol\mu)$) if $\boldsymbol\mu\in\overline{O(\boldsymbol\lambda)}$.
Otherwise we write $\boldsymbol\lambda\not\to\boldsymbol\mu$ (or $\Theta^{-1}(\boldsymbol\lambda)\not\to\Theta^{-1}(\boldsymbol\mu)$).
\end{definition*}

\begin{result}\label{Result2.2}
Let $\boldsymbol\lambda,\boldsymbol\mu,\boldsymbol\nu\in\boldsymbol\Lambda$.

(i) If $\boldsymbol\lambda\to\boldsymbol\mu$ and $\boldsymbol\mu\to\boldsymbol\nu$, then $\boldsymbol\lambda\to\boldsymbol\nu$.

(ii) If $\boldsymbol\lambda\to\boldsymbol\mu$ and, in addition, $\F$ is algebraically closed, then $\boldsymbol\mu\not\to\boldsymbol\lambda$.
\end{result}
\begin{proof}
For item (i) it is enough to observe that from $\boldsymbol\mu\in\overline{O(\boldsymbol\lambda)}$ we get $O(\boldsymbol\mu)\subseteq\overline{O(\boldsymbol\lambda)}$ (see Result~\ref{Result2.1}), and hence $\overline{O(\boldsymbol\mu)}\subseteq\overline{O(\boldsymbol\lambda)}$.
For a proof of item (ii) see~\cite[Proposition~2.5.2]{Geck2003}.
\end{proof}

We denote by $B$ the Borel subgroup of $G$ consisting of all upper triangular matrices in $G$.
Then $B$ is a parabolic subgroup of $G$.
If $\boldsymbol\lambda\in\boldsymbol\Lambda$ and $\F$ is algebraically closed we have, see for example~\cite[Proposition~3.2.12]{Geck2003}, that the set $(\overline{\boldsymbol\lambda B})G\,(=\{xg\colon x\in\overline{\boldsymbol\lambda B}, g\in G\})$ is a Zariski-closed subset of $\boldsymbol\Lambda$.
Hence we get (see, for example,~\cite[Lemma~2.4]{IvanovaPallikaros2022} for the details of the proof):

\begin{result}\label{Result2.3}
Suppose $\F$ is algebraically closed.
If $\boldsymbol\lambda\in\boldsymbol\Lambda$, then $(\overline{\boldsymbol\lambda B})G=\overline{\boldsymbol\lambda G}\, (=O(\boldsymbol\lambda))$.
\end{result}

Finally for this section, we recall one more result from~\cite{IvanovaPallikaros2022} which will be useful later on.

\begin{result}\label{Result1}
(See \cite[Lemma~2.2]{IvanovaPallikaros2022}.)
Let $f\colon\F\to\boldsymbol\Lambda$ be a continuous function in the Zariski topology and let $S=\F-\{0_{\F}\}$.
Then $f(0_\F)\in\overline{f(S)}$.
\end{result}

{\bf Notation.}
For the rest of the paper we fix $n=3$, and restrict to the case the field $\F$ is algebraically closed.
Symbol $\mathcal T$ will denote the set of the $27\,(=n^3)$ ordered triples $\{(i,j,k)\colon 1\le i,j,k\le 3\}$.
For $m\in\mathbb Z-\{0\}$, by the symbol $m\cdot1_\F$ we mean the sum of the $|m|$ terms $1_\F+\ldots+1_\F$ (resp., $-1_\F+\ldots+-1_\F$) if $m>0$ (resp., $m<0$).
We fix $\mathcal N$ to be the subset of $\boldsymbol\Lambda$ defined by $\Theta^{-1}(\mathcal N)=\{\mathfrak g\in\boldsymbol A\colon \mathfrak g$ is nilpotent associative$\}$.
Finally, recall that by $(e_1,e_2,e_3)$ we denote the standard (ordered) basis we have fixed for the underlying vector space $V$ on which all our algebras are defined,
and by $[,]_{\mathfrak g}$ we mean the product of two elements of such an algebra $\mathfrak g\in\boldsymbol A$ --- this notation should not be confused with commutators or Lie products.

\section{Two useful lemmas}\label{SectionTwoLemmas}

For each $\beta\in\F$ we define the element $\boldsymbol\sigma(\beta)\in\boldsymbol\Lambda$ by $\boldsymbol\sigma(\beta)={\bf231}+\beta{\bf321}$.
The algebras $\Theta^{-1}(\boldsymbol\sigma(\beta))$ are then associative and also nilpotent of class 2.
Moreover, for $\beta\in\F-\{-1_\F\}$ the dimension of the Lie algebra of derivations is the same for all these algebras.
As here we are working over an arbitrary algebraically closed field (of characteristic not necessarily equal to zero), it is therefore not possible for us (at least with the general results we have at our disposal at the moment) to rule out the possibility of degeneration between  members of this family (for $\beta\ne-1_\F$), as one would be justified to do when considering the special case $\F=\mathbb C$.
(See, for example~\cite[Proposition~3.2(iv)]{IvanovaPallikaros2022}, in particular the references included in its proof for an explanation.)
In the following two lemmas we employ an alternative technique which helps us overcome this extra difficulty which did not arise in~\cite{IvanovaPallikaros2022} where we worked over~$\mathbb C$.

\medskip
It will also be convenient to define an equivalence relation~$\mathcal R$ on $\F$ by $\mathcal R=\{(\alpha,\beta)\in\F\times\F\colon \alpha=\beta$ or $\alpha\beta=1_\F\}$.
When ${\rm char}\,\F\ne2$, the equivalence classes corresponding to the equivalence relation $\mathcal R$ are $\overline{0_\F}=\{0_\F\}$, $\overline{1_\F}=\{1_\F\}$, $\overline{-1_\F}=\{-1_\F\}$ and $\overline{\alpha}=\{\alpha,\alpha^{-1}\}$ for $\alpha\in\F-\{0_\F,1_\F,-1_\F\}$, with the obvious adjustments when ${\rm char}\,\F=2$.
[We denote by $\overline{\beta}$ the equivalence class of $\beta\in\F$ relative to equivalence relation $\mathcal R$.]


%

\begin{lemma}\label{Lemma_n4_are_non-isomorphic}
Let $\beta,\xi\in\F$ with $\beta\not\in\overline{\xi}$.
Then $\boldsymbol\sigma(\xi)\not\in\overline{O(\boldsymbol\sigma(\beta))}$.
(In particular, $\boldsymbol\sigma(\xi)\not\in O(\boldsymbol\sigma(\beta))$ whenever $\beta\not\in\overline{\xi}$.)
\end{lemma}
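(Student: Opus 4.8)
The plan is to combine Result~\ref{Result2.3} with an explicit computation of the Borel orbit of $\boldsymbol\sigma(\beta)$ and a congruence invariant of a $2\times 2$ matrix. Since $\mathcal R$ is symmetric, the hypothesis $\beta\not\in\overline{\xi}$ is the same as $\xi\not\in\overline{\beta}$, and by Result~\ref{Result2.3} we have $\boldsymbol\sigma(\xi)\in\overline{O(\boldsymbol\sigma(\beta))}=(\overline{\boldsymbol\sigma(\beta)B})G$ if and only if some element of $O(\boldsymbol\sigma(\xi))$ lies in $\overline{\boldsymbol\sigma(\beta)B}$. So it suffices to prove that no $G$-translate of $\boldsymbol\sigma(\xi)$ can lie in $\overline{\boldsymbol\sigma(\beta)B}$ unless $\xi\in\overline{\beta}$.

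First I would compute $\boldsymbol\sigma(\beta)B$ explicitly. For upper triangular $g=(g_{ij})\in B$ the first column of $g^{-1}$ is $(g_{11}^{-1},0,0)^{\mathrm{tr}}$, so in the transformation rule $(\boldsymbol\sigma(\beta)g)_{pqr}=\hat g_{r1}\bigl(g_{2p}g_{3q}+\beta g_{3p}g_{2q}\bigr)$ only the terms with $r=1$ survive. A short calculation then shows that the only possibly nonzero entries are those indexed by $(2,3,1),(3,2,1),(3,3,1)$ and that $(\boldsymbol\sigma(\beta)g)_{321}=\beta\,(\boldsymbol\sigma(\beta)g)_{231}$. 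Hence $\boldsymbol\sigma(\beta)B$ is contained in the linear subspace $L_\beta=\{t(\mathbf{231}+\beta\,\mathbf{321})+s\,\mathbf{331}\colon t,s\in\F\}$, and since $L_\beta$ is Zariski-closed we get $\overline{\boldsymbol\sigma(\beta)B}\subseteq L_\beta$. Writing $\mathfrak m_{t,s}=\Theta^{-1}(t\,\mathbf{231}+\beta t\,\mathbf{321}+s\,\mathbf{331})$, it therefore only remains to prove that $\mathfrak m_{t,s}\simeq\Theta^{-1}(\boldsymbol\sigma(\xi))$ forces $\xi\in\overline{\beta}$.

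The core is an isomorphism invariant. Each algebra here has one-dimensional derived subalgebra and one-dimensional two-sided annihilator, the former contained in the latter; hence the multiplication induces, after choosing a generator of the derived subalgebra, a bilinear form $\bar B$ on the two-dimensional quotient by the annihilator, well defined up to congruence $\bar B\mapsto P^{\mathrm{tr}}\bar B P$ (with $P\in\GL(2,\F)$) and nonzero rescaling. I would set $I(\bar B)=\det\bar B/(\bar B_{12}-\bar B_{21})^2$: under $\bar B\mapsto\kappa\,P^{\mathrm{tr}}\bar B P$ one has $\det\bar B\mapsto\kappa^2(\det P)^2\det\bar B$, while the alternating quantity satisfies $\bar B_{12}-\bar B_{21}\mapsto\kappa(\det P)(\bar B_{12}-\bar B_{21})$ in every characteristic, so the ratio $I$ is an isomorphism invariant wherever the denominator is nonzero (and the vanishing of $\bar B_{12}-\bar B_{21}$, i.e. symmetry of $\bar B$, is itself such an invariant). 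For $\boldsymbol\sigma(\xi)$ one gets $\bar B=\left(\begin{smallmatrix}0&1\\\xi&0\end{smallmatrix}\right)$ and $I=-\xi/(1-\xi)^2$, whereas for $\mathfrak m_{t,s}$ with $t\ne0$ one gets $\bar B=\left(\begin{smallmatrix}0&t\\\beta t&s\end{smallmatrix}\right)$ and $I=-\beta/(1-\beta)^2$, independent of $s$.

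Finally I would compare the two values. If $\mathfrak m_{t,s}\simeq\Theta^{-1}(\boldsymbol\sigma(\xi))$ with $t\ne0$, then equality of the invariants gives $-\xi/(1-\xi)^2=-\beta/(1-\beta)^2$, which rearranges to $(\xi-\beta)(1-\xi\beta)=0$, that is $\xi\in\overline{\beta}$; the case $t=0$ is excluded because there $\bar B$ has rank at most $1$ and is symmetric, while the form attached to $\boldsymbol\sigma(\xi)$ is of rank $2$ or non-symmetric. Tracing this back through Result~\ref{Result2.3} yields $\boldsymbol\sigma(\xi)\not\in\overline{O(\boldsymbol\sigma(\beta))}$. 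I expect the main obstacle to be the bookkeeping of the degenerate boundary values $\beta,\xi\in\{0_\F,1_\F,-1_\F\}$, where $I$ equals $0$ or $\infty$ or is undefined and one must argue instead with the rank and the symmetry of $\bar B$; this is also precisely where the characteristic intervenes, since for $\mathrm{char}\,\F=2$ the classes $\overline{1_\F}$ and $\overline{-1_\F}$ coincide and the alternating quantity $\bar B_{12}-\bar B_{21}$ needs to be treated with extra care.
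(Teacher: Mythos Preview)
Your proof is correct and proceeds by a genuinely different route from the paper. Both arguments begin the same way: compute the Borel orbit $\boldsymbol\sigma(\beta)B$, observe that it lies in the linear span $L_\beta$ of ${\bf231}+\beta\,{\bf321}$ and ${\bf331}$, and invoke Result~\ref{Result2.3}. From there the paper takes an arbitrary $g\in G$, writes out the coordinates $\mu_{221},\mu_{231},\mu_{321},\mu_{331}$ of $\boldsymbol\sigma(\xi)g$ explicitly in the entries of $g$, and derives a contradiction from the vanishing-ideal constraints $\mu_{221}=0_\F$ and $\mu_{321}=\beta\mu_{231}$ via a case split on whether $\xi=-1_\F$. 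You instead attach to each algebra the congruence-and-scaling invariant $I(\bar B)=\det\bar B/(\bar B_{12}-\bar B_{21})^2$ of the induced bilinear form on the quotient by the annihilator; this is constant along $L_\beta$ (for $t\ne0$) with value $-\beta/(1-\beta)^2$, and equating it to $-\xi/(1-\xi)^2$ factors exactly as $(\xi-\beta)(1-\xi\beta)=0$, which explains intrinsically why the relation $\mathcal R$ appears. The paper's method is entirely elementary linear algebra and needs no invariant; yours is more conceptual and makes the moduli parameter transparent. Your boundary bookkeeping also goes through: symmetry of $\bar B$ disposes of the cases $\beta=1_\F$ or $\xi=1_\F$, and for $t=0$ the two-sided annihilator of $\mathfrak m_{0,s}$ jumps to dimension at least~$2$, which already separates it from $\Theta^{-1}(\boldsymbol\sigma(\xi))$---so your rank/symmetry remark there, while slightly imprecise (the quotient is no longer two-dimensional), reaches the right conclusion.
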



\begin{proof}
Assume the hypothesis.
Set $\boldsymbol\lambda=\boldsymbol\sigma(\beta)$ and $\boldsymbol\nu=\boldsymbol\sigma(\xi)$.
In view of Result~\ref{Result2.3} it suffices to show that $\overline{\boldsymbol\lambda B}\cap O(\boldsymbol\nu)=\varnothing$.
Suppose, on the contrary, that $\overline{\boldsymbol\lambda B}\cap O(\boldsymbol\nu)\ne\varnothing$ and let $\boldsymbol\mu\,(=(\mu_{ijk}))\in\overline{\boldsymbol\lambda B}\cap O(\boldsymbol\nu)$.
Then $\boldsymbol\mu=\boldsymbol\nu g$ for some $g\,(=(g_{ij}))\in G\,(=\GlF)$.
Defining $\gamma=(\det g)^{-1}(g_{22}g_{33}-g_{23}g_{32})\in\F$, an easy computation gives the following relations
\begin{gather*}
\mu_{231}=\gamma(g_{22}g_{33}+\xi g_{23}g_{32}),\qquad \mu_{321}=\gamma(g_{23}g_{32}+\xi g_{22}g_{33}),\\
\mu_{331}=\gamma(1_\F+\xi) g_{23}g_{33},\qquad \mu_{221}=\gamma(1_\F+\xi) g_{22}g_{32}.
\end{gather*}
Also observe that for $b\,(=(b_{ij}))\in B$ we have that
\[
\boldsymbol\lambda b=\Big(\frac{b_{22}b_{33}}{b_{11}}\Big){\bf231}+\beta\Big(\frac{b_{22}b_{33}}{b_{11}}\Big){\bf321}+(1_\F+\beta)\Big(\frac{b_{23}b_{33}}{b_{11}}\Big){\bf331}, \mbox{\ with\ } b_{11}b_{22}b_{33}\ne0_\F
\]
(compare with~\cite[Example~2.6]{IvanovaPallikaros2022}).
It follows that the polynomials $X_{321}-\beta X_{231}$ and $X_{ijk}$ for all $(i,j,k)\in \mathcal T-\{(2,3,1),\, (3,2,1),\, (3,3,1)\}$ belong to the vanishing ideal ${\bf I}(\boldsymbol\lambda B)$.
This leads to $\mu_{321}=\beta\mu_{231}$ and $\mu_{ijk}=0_\F$ whenever the ordered triple $(i,j,k)$ belongs to the set $\mathcal T-\{(2,3,1),\, (3,2,1),\, (3,3,1)\}$.
If $\gamma=0_\F$ (equivalently, $g_{22}g_{33}-g_{23}g_{32}=0_\F$) we see immediately that  $\mu_{231}=\mu_{321}=\mu_{331}=0_\F$ leading to $\boldsymbol\mu={\bf0}$, a contradiction since ${\bf0}\not\in O(\boldsymbol\nu)$.
So, for the rest of the proof we assume that $\gamma\ne0_\F$ and consider the two subcases below.

(i) We suppose first that $\xi=-1_\F$.
It follows that $\mu_{321}+\mu_{231}=0_\F$.
Invoking the fact that $\mu_{321}=\beta\mu_{231}$ (see discussion above), we get $(1_\F+\beta)\mu_{231}=0_\F$.
But $1_\F+\beta\ne0_\F$ since $\beta\not\in\overline{\xi}\,(=\{-1_\F\})$ by assumption.
Hence $\mu_{231}=0_\F$ giving $\mu_{321}=0_\F$ also.
We conclude that $\boldsymbol\mu={\bf0}$ in this case since the assumption $\xi=-1_\F$ also ensures that $\mu_{331}=0_\F$.

(ii) Finally, we suppose that $\xi\ne-1_\F$.
From the assumption $\gamma\ne0_\F$ and the condition $\mu_{321}=\beta\mu_{231}$ obtained above we get that $g_{23}g_{32}+\xi g_{22}g_{33}=\beta g_{22}g_{33}+(\beta\xi)g_{23}g_{32}$.
Hence  $(1_\F-\beta\xi)g_{23}g_{32}=(\beta-\xi)g_{22}g_{33}$.
It follows that $g_{23}g_{32}\ne0_\F$.
[If $g_{23}g_{32}=0_\F$, then $g_{22}g_{33}=0_\F$ since $\beta-\xi\ne0_\F$ (we have assumed that $\beta\not\in\overline{\xi}$), leading to $\gamma=0_\F$.]
But $1_\F-\beta\xi\ne0_\F$ (again by the assumption that $\beta\not\in\overline{\xi}$), so $(1_\F-\beta\xi)g_{23}g_{32}\ne0_\F$.
Hence $(\beta-\xi)g_{22}g_{33}\ne0_\F$.
We conclude that $g_{22}$ and $g_{32}$ are both non-zero, leading to $\mu_{221}\ne0_\F$ (since $\gamma\ne0_\F$ and $1_\F+\xi\ne0_\F$ by assumption), which is a contradiction.
\end{proof}

\begin{lemma}\label{Lemma3.2}
Let $\xi\in\F-\{-1_\F\}$ and let $\boldsymbol\rho={\bf221}+(2\cdot1_\F){\bf321}+{\bf331}\in\boldsymbol\Lambda$.
Then $\boldsymbol\sigma(\xi)\,(={\bf231}+\xi{\bf321})\not\in \overline{O(\boldsymbol\rho)}$.
\end{lemma}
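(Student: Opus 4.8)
The plan is to mirror the proof of Lemma~\ref{Lemma_n4_are_non-isomorphic}. By Result~\ref{Result2.3} we have $\overline{O(\boldsymbol\rho)}=(\overline{\boldsymbol\rho B})G$, so it suffices to show that $\overline{\boldsymbol\rho B}\cap O(\boldsymbol\sigma(\xi))=\varnothing$: indeed, if $\boldsymbol\sigma(\xi)$ were in $\overline{O(\boldsymbol\rho)}$ we could write $\boldsymbol\sigma(\xi)=xg$ with $x\in\overline{\boldsymbol\rho B}$ and $g\in G$, whence $x=\boldsymbol\sigma(\xi)g^{-1}\in\overline{\boldsymbol\rho B}\cap O(\boldsymbol\sigma(\xi))$. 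So I would argue by contradiction: assume there is $\boldsymbol\mu\,(=(\mu_{ijk}))\in\overline{\boldsymbol\rho B}\cap O(\boldsymbol\sigma(\xi))$ and write $\boldsymbol\mu=\boldsymbol\sigma(\xi)g$ for some $g\,(=(g_{ij}))\in G$.

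First I would describe $\boldsymbol\rho B$ explicitly. Taking a general $b\,(=(b_{ij}))\in B$ (upper triangular, $b_{11}b_{22}b_{33}\ne0_\F$) and computing the products of the transformed basis vectors $v_j=\sum_i b_{ij}e_i$, a direct calculation gives
\[
\boldsymbol\rho b=\frac{b_{22}^2}{b_{11}}{\bf221}+\frac{b_{22}b_{23}}{b_{11}}{\bf231}+\frac{b_{22}(b_{23}+(2\cdot1_\F)b_{33})}{b_{11}}{\bf321}+\frac{(b_{23}+b_{33})^2}{b_{11}}{\bf331}.
\]
From this I read off relations lying in ${\bf I}(\boldsymbol\rho B)$: every coordinate $X_{ijk}$ with $(i,j,k)\in\mathcal T-\{(2,2,1),(2,3,1),(3,2,1),(3,3,1)\}$, and --- crucially --- the polynomial $(X_{231}+X_{321})^2-(4\cdot1_\F)X_{221}X_{331}$, since on $\boldsymbol\rho B$ both of its terms equal $(4\cdot1_\F)b_{22}^2(b_{23}+b_{33})^2/b_{11}^2$. (Geometrically this relation records that the symmetric part of the bilinear form defining the algebra is singular.) As ${\bf I}(\boldsymbol\rho B)={\bf I}(\overline{\boldsymbol\rho B})$, all these relations hold at $\boldsymbol\mu$.

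The heart of the argument is then to substitute $\boldsymbol\mu=\boldsymbol\sigma(\xi)g$ into the relation $(\mu_{231}+\mu_{321})^2=(4\cdot1_\F)\mu_{221}\mu_{331}$, reusing the formulas for $\mu_{231},\mu_{321},\mu_{331},\mu_{221}$ (and the abbreviation $\gamma=(\det g)^{-1}(g_{22}g_{33}-g_{23}g_{32})$) already recorded in the proof of Lemma~\ref{Lemma_n4_are_non-isomorphic}. A short computation gives $\mu_{231}+\mu_{321}=\gamma(1_\F+\xi)(g_{22}g_{33}+g_{23}g_{32})$ and $\mu_{221}\mu_{331}=\gamma^2(1_\F+\xi)^2g_{22}g_{32}g_{23}g_{33}$, so the identity $(A+B)^2-(4\cdot1_\F)AB=(A-B)^2$ turns the relation into $\gamma^2(1_\F+\xi)^2(g_{22}g_{33}-g_{23}g_{32})^2=0_\F$. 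Since $g_{22}g_{33}-g_{23}g_{32}=(\det g)\gamma$, this reads $\gamma^4(1_\F+\xi)^2(\det g)^2=0_\F$; as $1_\F+\xi\ne0_\F$ (this is exactly where $\xi\ne-1_\F$ enters) and $\det g\ne0_\F$, we get $\gamma=0_\F$. But $\gamma=0_\F$ kills each of $\mu_{221},\mu_{231},\mu_{321},\mu_{331}$, and with the off-support relations this forces $\boldsymbol\mu={\bf0}$, contradicting ${\bf0}\notin O(\boldsymbol\sigma(\xi))$.

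The step I expect to require the most care is confirming that this single polynomial relation does the job in \emph{every} characteristic, matching the char~$2$ versus char~$\ne2$ dichotomy flagged in the introduction. When ${\rm char}\,\F=2$ both the coefficient $2\cdot1_\F$ appearing in $\boldsymbol\rho$ and the factor $4\cdot1_\F$ vanish, so one might fear the relation collapses to a triviality. The key point to verify is that the factorisation $(A+B)^2-(4\cdot1_\F)AB=(A-B)^2$ is an identity over $\mathbb Z$, hence valid irrespective of characteristic; consequently the reduction to $\gamma^2(1_\F+\xi)^2(g_{22}g_{33}-g_{23}g_{32})^2=0_\F$ and thence to $\gamma=0_\F$ goes through verbatim in char~$2$ as well (where additionally $-1_\F=1_\F$, so the hypothesis $\xi\ne-1_\F$ still supplies $1_\F+\xi\ne0_\F$). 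I would check this reduction explicitly to confirm that no separate char~$2$ argument is needed.
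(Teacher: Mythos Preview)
Your proof is correct, and it takes a genuinely different route from the paper's. The paper first replaces $\boldsymbol\rho$ by the orbit representative $\boldsymbol\nu=-{\bf231}+{\bf321}+{\bf331}$, whose $B$-orbit is supported only on $\{(2,3,1),(3,2,1),(3,3,1)\}$; this yields \emph{linear} relations $\mu_{231}+\mu_{321}=0$ and $\mu_{221}=0$ in ${\bf I}(\boldsymbol\nu B)$. The paper then runs a genuine case split on the characteristic: in ${\rm char}\,\F=2$ the equation $g_{22}g_{33}+g_{23}g_{32}=0$ directly says $\gamma=0$, while in ${\rm char}\,\F\ne2$ one has to combine this with $\mu_{221}=0$ to force a contradiction via $g_{22}g_{32}\ne0$.

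By contrast, you work directly with $\boldsymbol\rho$ and discover the single \emph{quadratic} relation $(X_{231}+X_{321})^2-(4\cdot1_\F)X_{221}X_{331}\in{\bf I}(\boldsymbol\rho B)$. The identity $(A+B)^2-4AB=(A-B)^2$ then collapses the analysis to $\gamma^4(1_\F+\xi)^2(\det g)^2=0_\F$ in one stroke, uniformly over all characteristics. What you lose is the simplicity of linear relations (your invariant is less obvious to spot); what you gain is the elimination of the preliminary change of basis and --- more significantly --- the characteristic dichotomy that the paper handles by hand. Your closing paragraph worrying about ${\rm char}\,\F=2$ is well founded but the answer is exactly as you suspected: the $\mathbb Z$-identity makes the reduction characteristic-free, so no separate argument is needed.
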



\begin{proof}
Assume the hypothesis.
Beginning with the commutation relations (relative to the standard basis $(e_1,e_2,e_3)$ of $V$) for $\Theta^{-1}(\boldsymbol\rho)$ we see, via the change of basis $e_1'=e_1$, $e_2'=e_2-e_3$, $e_3'=e_3$, that $\boldsymbol\nu=-{\bf231}+{\bf321}+{\bf331}\in O(\boldsymbol\rho)$.
Moreover, for $b\,(=(b_{ij}))\in B$ we get that
\[
\boldsymbol\nu b=-\Big(\frac{b_{22}b_{33}}{b_{11}}\Big){\bf231}+\Big(\frac{b_{22}b_{33}}{b_{11}}\Big){\bf321}+\Big(\frac{b_{33}^2}{b_{11}}\Big){\bf331}, \mbox{\ with\ } b_{11}b_{22}b_{33}\ne0_\F.
\]

It follows that the polynomials $X_{231}+X_{321}$ and $X_{ijk}$ for $(i,j,k)\in \mathcal T-\{(2,3,1),\, (3,2,1),\, (3,3,1)\}$ of $\F[{\bf X}]$ all belong to the vanishing ideal ${\bf I}(\boldsymbol\nu B)$.

We suppose, on the contrary, that $\boldsymbol\sigma(\xi)\in\overline{O(\boldsymbol\nu)}\,(=\overline{O(\boldsymbol\rho)})$.
Then, in view of Result~\ref{Result2.3}, we must have that $\boldsymbol\sigma(\xi) g\in\overline{\boldsymbol\nu B}$ for some $g\,(=(g_{ij}))\in G$.
Set $\boldsymbol\mu\,(=(\mu_{ijk}))=\boldsymbol\sigma(\xi)g\in\boldsymbol\Lambda$ and  let $\gamma=(\det g)^{-1}(g_{22}g_{33}-g_{23}g_{32})\in\F$.
Comparing with the proof of Lemma~\ref{Lemma_n4_are_non-isomorphic} we get, in view of the observations above, that
\begin{gather*}
\mu_{231}=\gamma(g_{22}g_{33}+\xi g_{23}g_{32}),\qquad \mu_{321}=\gamma(g_{23}g_{32}+\xi g_{22}g_{33})=-\mu_{231},\\
\mu_{331}=\gamma (1_\F+\xi)g_{23}g_{33},\qquad\mu_{221}=\gamma(1_\F+\xi) g_{22}g_{32}=0_\F,
\end{gather*}
with all other $\mu_{ijk}$ being equal to zero.
Moreover, we can assume that $\gamma\ne0_\F$ since the assumption $\gamma=0_\F$ leads to $\boldsymbol\mu=\bf0$ which is a contradiction.
From $\mu_{321}+\mu_{231}=0_\F$ and $\gamma\ne0_\F$ we get that $(1_\F+\xi)(g_{22}g_{33}+g_{23}g_{32})=0_\F$.
Moreover, the assumption $1_\F+\xi\ne0_\F$ leads to $g_{22}g_{33}+g_{23}g_{32}=0_\F$.
Now let $\gamma'=(\det g)^{-1}(g_{22}g_{33}+g_{23}g_{32})$.
Then $\gamma'=0_{\F}$.
If ${\rm char}\,\F=2$, we have $\gamma=\gamma'$ with $\gamma\ne0_\F$ and $\gamma'=0_\F$, a contradiction.
In the case ${\rm char}\,\F\ne2$ we get, by looking at $\gamma'+\gamma$ (which is non-zero), that $g_{22}g_{33}\ne0_\F$.
Invoking the fact that $\gamma'=0_\F$ we also get that $g_{23}g_{32}\ne0_\F$.
In particular, $g_{22}\ne0_\F$ and $g_{32}\ne0_\F$ giving $\mu_{221}\ne0_\F$, since $(1_\F+\xi)\gamma\ne0_\F$.
We have again arrived at a contradiction.
This establishes that $O(\boldsymbol\sigma(\xi))\cap\overline{\boldsymbol\nu B}=\varnothing$.
We conclude that $\boldsymbol\sigma(\xi)\not\in\overline{O(\boldsymbol\nu)}\,(=\overline{O(\boldsymbol\rho)})$ whenever $\xi\ne-1_\F$.
\end{proof}

\section{3-dimensional nilpotent associative  algebras}\label{Section3dimAlg}

Recall the running assumption that $\F$ is algebraically closed.
Also recall that $(e_1,e_2,e_3)$ is the standard basis of the underlying 3-dimensional $\F$-vector space $V$ we have fixed in Section~\ref{sec:Prilim}.
With this assumption on $\F$ we immediately get, based on~\cite[Theorem~2.3.6]{KrusePrice1969} or~\cite[Section~5]{deGraaf2018}, the classification of 3-dimensional nilpotent associative algebras given in Table~\ref{Table3dimAssocClass}.

\setcounter{tbn}{0}

{\begin{center}
\footnotesize
\refstepcounter{tbn}\label{Table3dimAssocClass}

\begin{longtable}{|c|l| }
\hline
$\mathfrak a\in\Theta^{-1}(\mathcal N)$   & Non-zero commutation relations relative to the standard basis of $V$  \\
\hline
 $\mathfrak a_0$&  Abelian (zero algebra)  \\
\hline
 $\mathfrak c_1$   & $e_3e_3=e_1$   \\
\hline
 $\mathfrak c_3$   & $e_2e_2=e_1$, $e_3e_3= e_1$  \\
\hline
 $\mathfrak a(\delta)$, $\delta\in\F$  & $e_2e_2=e_1$, $e_3e_3=\delta e_1$, $e_2e_3=e_1$    \\
\hline
 $\mathfrak l_1$  & $e_2e_3=e_1$, $e_3e_2=-e_1$   \\
\hline
 $\mathfrak c_5$  & $e_1e_1=e_2$, $e_1e_2=e_3$, $e_2e_1=e_3$   \\
\hline
\end{longtable}

Table~\ref{Table3dimAssocClass}:
Non-isomorphic 3-dimensional nilpotent associative algebras over an algebraically closed field $\F$ (the information taken from~\cite{KrusePrice1969} or~\cite{deGraaf2018}).
We write $e_ie_j$ in place of $[e_i,e_j]_{\mathfrak a}$ \ for $\mathfrak a\in\Theta^{-1}(\mathcal N)$.

\end{center}}

\begin{remark}\label{rem:commentsOnTable1}
(Some comments on Table~\ref{Table3dimAssocClass}.)

(i) 
Recalling the definition of the set $\mathcal N$ in Section~\ref{sec:Prilim}, we have that as $\mathfrak a$ runs through the elements of the first column of Table~\ref{Table3dimAssocClass}, the structure vector $\Theta(\mathfrak a)$ runs through a complete system of representatives for the distinct $G$-orbits inside $\mathcal N$.
(Clearly the set $\mathcal N$ is a union of $G$-orbits.)

\smallskip
(ii) About $\mathfrak c_3$: Since $\F$ is algebraically closed we get that $(\F^*)^2=\F^*$, so in item (4) of~\cite[Theorem~2.3.6]{KrusePrice1969} there is exactly one isomorphism class of algebras, given for example by choosing $\gamma=1_\F$.
Moreover, by setting $\hat{\mathfrak c}_3=\Theta^{-1}({\bf231}+{\bf321})\in\boldsymbol A$ we get, in the case ${\rm char}\,\F\ne2$, that $\hat{\mathfrak c}_3\simeq \mathfrak c_3$.
Beginning with the commutation relations for $\mathfrak c_3$ relative to the standard basis $(e_1,e_2,e_3)$ of $V$ (see Table~\ref{Table3dimAssocClass}), in order to go from $\Theta(\mathfrak c_3)$ to $\Theta(\hat{\mathfrak c}_3)$ consider the change of basis $e_1'=2e_1$, $e_2'=e_2-\omega e_3$, $e_3'=e_2+\omega e_3$, where $\omega\in\F$ is a root of polynomial $x^2+1\in\F[x]$.

\smallskip
(iii) It will be convenient, in order to be able to compare the results obtained in this paper directly with the results in~\cite{IvanovaPallikaros2022}, to define in $\boldsymbol A$ algebras $\mathfrak a_2$ and $\mathfrak a_3(\kappa)$, for each $\kappa\in\mathbb F$, by $\mathfrak a_2=\Theta^{-1}({\bf231})$ and $\mathfrak a_3(\kappa)=\Theta^{-1}({\bf221}+\kappa{\bf321}+{\bf331})$.
We can then see immediately that $\mathfrak c_3=\mathfrak a_3(\kappa=0_\F)$.
Moreover, starting from the defining condition for $\mathfrak a_2$ and considering the change of basis $e_1'=e_1$, $e_2'=e_2+e_3$, $e_3'=e_3$ we get that $\mathfrak a_2\simeq \mathfrak a(\delta=0_\F)$.
Finally note that for $\kappa\in\F-\{0_\F\}$, now beginning with the commutation relations of $\mathfrak a_3(\kappa)$ relative to the standard basis of $V$, the change of basis $e_1'=e_1$, $e_2'=-e_2$, $e_3'=e_3$ gives $\mathfrak a_3(\kappa)\simeq\mathfrak a_3(-\kappa)$.
In fact we have equality in this last expression in the case ${\rm char}\,\F=2$.

\medskip
(iv) Comparing with the discussion in Section~\ref{SectionTwoLemmas}, for each $\beta\in\F$ we define $\mathfrak h(\beta)\in\boldsymbol A$ as the algebra structure satisfying $\mathfrak h(\beta)=\Theta^{-1}({\bf231}+\beta{\bf321})=\Theta^{-1}(\boldsymbol\sigma(\beta))$.
Then $\mathfrak h(\beta=1_\F)=\hat{\mathfrak c}_3$, $\mathfrak h(\beta=0_\F)=\mathfrak a_2\,(\simeq\mathfrak a(\delta=0_\F)$ from item~(iii)), and $\mathfrak h(\beta=-1_\F)=\mathfrak l_1$.
Note that $\hat{\mathfrak c}_3=\mathfrak l_1$ in the special case ${\rm char}\,\F=2$.
\end{remark}

In the sequence of lemmas that follow (and their corollaries), we include some further observations regarding the algebras in Table~\ref{Table3dimAssocClass}.

\begin{lemma}\label{lemma2.11}
Let $\beta,\beta'\in\F$.
We have that $\mathfrak h(\beta)\simeq\mathfrak h(\beta')$ if, and only if, $\beta'\in\overline{\beta}$.
\end{lemma}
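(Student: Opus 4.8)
The plan is to prove the two implications separately, observing at the outset that the reverse implication (``if'') is the constructive part, while the forward implication (``only if'') will follow almost immediately from Lemma~\ref{Lemma_n4_are_non-isomorphic}.

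For the ``if'' direction, suppose $\beta'\in\overline\beta$. When $\beta'=\beta$ there is nothing to prove, so the remaining case is $\beta'=\beta^{-1}$, which forces $\beta\ne0_\F$. Here I would write down an explicit isomorphism by ``swapping and rescaling''. Relative to the standard basis the only non-zero products of $\mathfrak h(\beta)$ are $e_2e_3=e_1$ and $e_3e_2=\beta e_1$, so I consider the change of basis $e_1'=\beta e_1$, $e_2'=e_3$, $e_3'=e_2$ (invertible precisely because $\beta\ne0_\F$). A short computation then gives $e_2'e_3'=e_3e_2=\beta e_1=e_1'$ and $e_3'e_2'=e_2e_3=e_1=\beta^{-1}e_1'$, with all other products vanishing; equivalently $\boldsymbol\sigma(\beta)g=\boldsymbol\sigma(\beta^{-1})$ for the corresponding $g\in\GlF$. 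This exhibits $\boldsymbol\sigma(\beta^{-1})\in O(\boldsymbol\sigma(\beta))$, that is, $\mathfrak h(\beta)\simeq\mathfrak h(\beta^{-1})$.

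For the ``only if'' direction, I would first translate the isomorphism statement into orbit language: by the identification of $G$-orbits with isomorphism classes recalled in Section~\ref{sec:Prilim} (and used in Remark~\ref{rem:commentsOnTable1}(i)), the condition $\mathfrak h(\beta)\simeq\mathfrak h(\beta')$ is equivalent to $\boldsymbol\sigma(\beta')\in O(\boldsymbol\sigma(\beta))$. Now I apply the contrapositive of Lemma~\ref{Lemma_n4_are_non-isomorphic} with $\xi=\beta'$: since $\boldsymbol\sigma(\beta')\in O(\boldsymbol\sigma(\beta))\subseteq\overline{O(\boldsymbol\sigma(\beta))}$, the lemma rules out the possibility $\beta\not\in\overline{\beta'}$, whence $\beta\in\overline{\beta'}$. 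Finally, because $\mathcal R$ is an equivalence relation (in particular symmetric), the statement $\beta\in\overline{\beta'}$ is the same as $\beta'\in\overline\beta$, as required.

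The only genuine work is the reverse direction, and even there the main obstacle is merely guessing the right transformation; the ``swap $e_2,e_3$ and rescale $e_1$'' map is natural once one notices that exchanging the two basis vectors interchanges the roles of the coefficients $1_\F$ and $\beta$ attached to ${\bf231}$ and ${\bf321}$. I would take care with the degenerate case $\beta=0_\F$: there $\overline{0_\F}=\{0_\F\}$, so the statement reduces to $\mathfrak h(0_\F)\simeq\mathfrak h(\beta')\iff\beta'=0_\F$, which is still covered by the two arguments above (the forward direction via Lemma~\ref{Lemma_n4_are_non-isomorphic}, the reverse direction trivially). No characteristic-dependent subtlety enters, so the single argument works uniformly whether ${\rm char}\,\F=2$ or ${\rm char}\,\F\ne2$.
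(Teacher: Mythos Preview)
Your proof is correct and follows essentially the same approach as the paper: the same explicit change of basis $e_1'=\beta e_1$, $e_2'=e_3$, $e_3'=e_2$ for the ``if'' direction, and the same appeal to Lemma~\ref{Lemma_n4_are_non-isomorphic} for the converse. The extra remarks you include about the $\beta=0_\F$ case and characteristic independence are not strictly needed but do no harm.
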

\begin{proof}
We first assume that $\beta\beta'=1_\F$.
In particular, $\beta,\beta'\in\F-\{0_{\F}\}$.
The non-zero commutation relations (relative to the standard basis $(e_1,e_2,e_3)$ of $V$) in $\mathfrak h(\beta)$ are: $e_2e_3=e_1$, $e_3e_2=\beta e_1$.
Consider the change of basis given by $e_1'=\beta e_1$, $e_2'=e_3$, $e_3'=e_2$.
Then the non-zero commutation relations of $\mathfrak h(\beta)$ relative to the ordered basis $(e_1', e_2', e_3')$ of $V$ are given by
\[
e_2'e_3'=e_3e_2=\beta e_1=e_1',\quad
e_3'e_2'=e_2e_3=e_1=(\beta^{-1})e_1'=\beta'e_1'.
\]
It follows that the structure vector of $\mathfrak h(\beta)$ relative to the basis $(e_1', e_2', e_3')$ is ${\bf231}+\beta'{\bf321}$ which is equal to $\Theta(\mathfrak h(\beta'))$.
We conclude that $\mathfrak h(\beta)\simeq\mathfrak h(\beta')$ whenever  $\beta'\in\overline{\beta}$ (the result is obvious if $\beta=\beta'$).

The converse is immediate from Lemma~\ref{Lemma_n4_are_non-isomorphic}.
\end{proof}



At this point it would be useful to recall the notation $m\cdot1_{\F}$ (for $m\in \mathbb Z-\{0\}$) introduced in Section~\ref{sec:Prilim}.
In what follows, for $m>0$, we will write $-m\cdot1_{\F}$ in the place of $(-m)\cdot1_{\F}\,(=-(m\cdot1_{\F}))$.
In particular, in the case ${\rm char}\,\F\ne2$ the element $2\cdot1_{\F}$ of $\F$ is non-zero and $-2\cdot1_{\F}\ne 2\cdot1_{\F}$.

\begin{lemma}\label{Lemma4.3}
Assume that ${\rm char}\,\F\ne2$.
If $\beta\in\F-\{0_\F,1_\F,-1_\F\}$, the $\mathfrak h(\beta)\simeq\mathfrak a_3(\kappa)$ for some $\kappa\in\F-\{0_\F,2\cdot1_\F,-2\cdot1_\F\}$.
Conversely, if $\kappa\in\F-\{0_\F,2\cdot1_\F,-2\cdot1_\F\}$, then $\mathfrak a_3(\kappa)\simeq\mathfrak h(\beta)$ for some $\beta\in\F-\{0_\F,1_\F,-1_\F\}$.
\end{lemma}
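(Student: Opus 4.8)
The plan is to treat both $\mathfrak h(\beta)$ and $\mathfrak a_3(\kappa)$ as nilpotent algebras of class $2$ whose entire product lands in the line $\F e_1$, with $e_1$ annihilating all of $V$. Each is therefore encoded by a bilinear form on the plane $W=\langle e_2,e_3\rangle$ with values in $\F e_1\cong\F$; for $\mathfrak h(\beta)$ the Gram matrix (in the basis $e_2,e_3$) is $\left(\begin{smallmatrix}0&1\\ \beta&0\end{smallmatrix}\right)$, while for $\mathfrak a_3(\kappa)$ it is $\left(\begin{smallmatrix}1&0\\ \kappa&1\end{smallmatrix}\right)$. Since it suffices to \emph{exhibit} one isomorphism, I would seek a change of basis of block form $f_1=c\,e_1$, $f_2=p\,e_2+q\,e_3$, $f_3=r\,e_2+s\,e_3$ (with $c\neq0_\F$ and $ps-qr\neq0_\F$), compute the four products $f_2f_2,f_3f_3,f_2f_3,f_3f_2$ inside $\mathfrak h(\beta)$, and impose that they equal $f_1,f_1,0_V,\kappa f_1$ respectively, so as to match $\Theta(\mathfrak a_3(\kappa))={\bf221}+\kappa{\bf321}+{\bf331}$.

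This reduces the whole statement to a small explicit system. Taking $p=q=1_\F$ for convenience, the conditions force $c=1_\F+\beta$ (nonzero because $\beta\neq-1_\F$), then $s=-r\beta$, and then $r^2=-\beta^{-1}$, which is solvable since $\F$ is algebraically closed and $\beta\neq0_\F$ (this also yields $r\neq0_\F$ and hence $ps-qr\neq0_\F$); the last product gives $\kappa=r(1_\F-\beta)$. This is precisely how I would produce the required $\kappa$: choose a square root $r$ of $-\beta^{-1}$ and read off $\kappa$. To verify $\kappa$ lands in the prescribed set I would use $\kappa^2=r^2(1_\F-\beta)^2=-(1_\F-\beta)^2\beta^{-1}$ and note that $\kappa=0_\F$ forces $\beta=1_\F$, while $\kappa^2=4\cdot1_\F$ rearranges to $(1_\F+\beta)^2=0_\F$, i.e.\ $\beta=-1_\F$; both are excluded, so $\kappa\in\F-\{0_\F,2\cdot1_\F,-2\cdot1_\F\}$.

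For the converse I would invert the relation $\kappa^2=-(1_\F-\beta)^2\beta^{-1}$ into the quadratic $\beta^2+(\kappa^2-2\cdot1_\F)\beta+1_\F=0_\F$, which has a root $\beta$ over the algebraically closed field $\F$. Substituting $\beta=0_\F,1_\F,-1_\F$ into this quadratic produces the values $1_\F$, $\kappa^2$, and $4\cdot1_\F-\kappa^2$, all nonzero under the hypothesis $\kappa\notin\{0_\F,2\cdot1_\F,-2\cdot1_\F\}$; hence $\beta\in\F-\{0_\F,1_\F,-1_\F\}$. Applying the forward construction to this $\beta$ gives $\mathfrak h(\beta)\simeq\mathfrak a_3(\kappa')$ with $(\kappa')^2=\kappa^2$, so $\kappa'=\pm\kappa$, and combining with $\mathfrak a_3(\kappa)\simeq\mathfrak a_3(-\kappa)$ from Remark~\ref{rem:commentsOnTable1}(iii) closes the argument in either case.

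The conceptual reason the correct $\kappa$ appears as a square root is that, writing the form's symmetric part $S=\tfrac12(M+M^{\rm tr})$ and letting $a$ denote the scalar of its antisymmetric part, the quantity $(\det S)/a^2$ is invariant under congruence combined with rescaling of $\F e_1$; evaluating it gives $-(1_\F+\beta)^2/(1_\F-\beta)^2$ for $\mathfrak h(\beta)$ and $(4\cdot1_\F-\kappa^2)/\kappa^2$ for $\mathfrak a_3(\kappa)$, which again pins down $\kappa^2$. I expect the only genuine obstacle to be bookkeeping: ensuring the scalars $1_\F+\beta$, $\beta$, $r$, and the three substituted discriminants are nonzero exactly when the parameters avoid the excluded sets, and that the two boundary cases collapse precisely to $\beta=\pm1_\F$. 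It is here that the hypotheses ${\rm char}\,\F\neq2$ (to invert $2\cdot1_\F$ and keep $S$ genuinely rank $2$) and $\F$ algebraically closed (for the square root and the quadratic) are used in an essential way.
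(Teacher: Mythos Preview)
Your proof is correct and takes essentially the same route as the paper: both exhibit an explicit block change of basis fixing the line $\F e_1$ and acting on $\langle e_2,e_3\rangle$, with the paper's auxiliary parameter $\alpha$ (where $\beta=-\alpha^2$ and $\kappa=-(\alpha+\alpha^{-1})$) corresponding to your $r^{-1}$ up to sign. The only cosmetic difference is in the converse, where the paper solves $x^2+\kappa x+1_\F=0$ for $\alpha$ and applies the inverse of its matrix $g_\alpha$ directly, whereas you solve a quadratic in $\beta$, reapply the forward construction, and invoke $\mathfrak a_3(\kappa)\simeq\mathfrak a_3(-\kappa)$ to absorb the sign; your invariant $(\det S)/a^2$ is a pleasant extra not present in the paper.
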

\begin{proof}
It will be convenient to define, for each $\alpha\in\F-\{0_{\F}\}$, the matrix $g_\alpha\in M_3(\F)$ by
$g_\alpha=${\footnotesize$\begin{pmatrix}\alpha-\alpha^{-1}&0_\F&0_\F\\0_\F&\alpha&1_\F\\0&1_\F&\alpha\end{pmatrix}$}.
Then $g_\alpha\in\GlF$ precisely when $\alpha\in\F-\{0_\F,1_\F,-1_\F\}$.

Suppose first that $\beta\in\F-\{0_\F,1_\F,-1_\F\}$.
Then, since $\F$ is algebraically closed, $\beta=-\alpha^2$ for some $\alpha\in\F-\{0_\F,1_\F,-1_\F,\omega,-\omega\}$ where $\omega^2+1_\F=0_\F$.
[Clearly, we have that $\alpha\in\{0_\F,1_\F,-1_\F,\omega,-\omega\}$ if, and only if, $-\alpha^2\in\{0_\F,1_\F,-1_\F\}$.]
Moreover, we observe that $({\bf231}+\beta{\bf321})g^{-1}_\alpha=({\bf231}+-\alpha^2{\bf321})g^{-1}_\alpha={\bf221}+{\bf331}+-(\alpha+\alpha^{-1}){\bf321}=\Theta(\mathfrak a_3(\kappa))$ where $\kappa=-(\alpha+\alpha^{-1})\in\F$ $-$ $\{0_\F$, $2\cdot1_\F,-2\cdot1_\F\}$.
[Note that (under the assumption $\alpha\ne0_\F$), we have $\kappa=0_\F$ iff $\alpha^2+1_\F=0_\F$ iff $\alpha\in\{\omega,-\omega\}$, while $\kappa=2\cdot1_\F$ iff $(\alpha+1_\F)^2=0_\F$ iff $\alpha=-1_\F$ and, finally $\kappa=-2\cdot1_\F$ iff $(\alpha-1_\F)^2=0_\F$ iff $\alpha=1_\F$.]

Conversely, we suppose now that $\kappa\in\F-\{0_\F,2\cdot1_\F,-2\cdot1_\F\}$.
Let $\alpha\in\F$ be a root of the polynomial $x^2+\kappa x+1_\F\in\F[x]$.
Clearly $\alpha\ne0_\F$ and, from the condition $\alpha^2+\kappa\alpha+1_\F=0_\F$, we get that $\kappa=-(\alpha+\alpha^{-1})$.
From the discussion in the first part of the proof and the assumption that $\kappa\not\in\{0_\F,2\cdot1_\F,-2\cdot1_\F\}$, we can observe that $\alpha\not\in\{0_\F,1_\F,-1_\F,\omega,-\omega\}$.
Moreover, we see that $({\bf231}+{\bf331}+\kappa{\bf321})g_\alpha=({\bf221}+{\bf331}+-(\alpha+\alpha^{-1}){\bf321})g_\alpha={\bf231}+-\alpha^2{\bf321}$.
Hence, $\mathfrak a_3(\kappa)\simeq\mathfrak h(-\alpha^2)$ for some $\alpha\in\F-\{0_\F,1_\F,-1_\F,\omega,-\omega\}$.
By setting $\beta=-\alpha^2$, we conclude that $\mathfrak a_3(\kappa)\simeq\mathfrak h(\beta)$ for some $\beta\in\F-\{0_\F,1_\F,-1_\F\}$
(compare again with the first part of the proof).
\end{proof}

%
%

Now let $\delta\in\F-\{0_\F\}$, making no assumption on the characteristic of $\F$.
Beginning with the defining conditions for $\mathfrak a(\delta)$ relative to the standard basis of $V$, and considering the change of basis $e_1'=e_1$, $e_2'=\kappa e_3$, $e_3'=e_2$ where $\kappa\in\F-\{0_\F\}$ is a root of the polynomial $x^2-\delta^{-1}\in\F[x]$, we get that $\mathfrak a(\delta)\simeq \mathfrak a_3(\kappa)$ with $\kappa^2=\delta^{-1}$.
(Recall our assumption that $\F$ is algebraically closed.)

In other words, for $\delta\in\F-\{0_\F\}$ and $g_\kappa=${\footnotesize$\begin{pmatrix}1_\F&0_\F&0_\F\\0_\F&0_\F&1_\F\\0_\F&\kappa&0_\F\end{pmatrix}$}$\in\GlF$ (where $\kappa\in\F-\{0_\F\}$ satisfies $\kappa^2=\delta^{-1}$) we have $({\bf221}+{\bf231}+\delta{\bf331})g_\kappa={\bf221}+\kappa{\bf321}+{\bf331}$.

Suppose now that $\kappa\in\F-\{0_\F\}$.
By setting $\delta=1_\F/\kappa^2\,(\in\F-\{0_\F\})$, we see that $({\bf221}+\kappa{\bf321}+{\bf331})g_\kappa^{-1}={\bf221}+{\bf231}+\delta{\bf331}$.
We have proved the following:

\begin{lemma}\label{lemma_aDelta_sim_a3K}
If $\delta\in\F-\{0_\F\}$, then $\mathfrak a(\delta)\simeq \mathfrak a_3(\kappa)$ for some $\kappa\in\F-\{0_\F\}$.
Conversely, if $\kappa\in\F-\{0_\F\}$, then $\mathfrak a_3(\kappa)\simeq \mathfrak a(\delta)$ for some $\delta\in\F-\{0_\F\}$.
In each of these cases, $\kappa$ and $\delta$ are related by $\kappa^2=\delta^{-1}$.
\end{lemma}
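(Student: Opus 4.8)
The plan is to exhibit an explicit change of basis realizing the isomorphism in each direction, exploiting the fact that the relevant structure vectors are supported entirely on triples of the form $(i,j,1)$, so the verification reduces to tracking the products of the ``active'' basis vectors $e_2$ and $e_3$ (with $e_1$ annihilating everything). First I would record the two structure vectors explicitly. From Table~\ref{Table3dimAssocClass} the non-zero products of $\mathfrak a(\delta)$ are $e_2e_2=e_1$, $e_2e_3=e_1$, $e_3e_3=\delta e_1$ (and $e_3e_2=0_V$), so $\Theta(\mathfrak a(\delta))={\bf221}+{\bf231}+\delta{\bf331}$; whereas $\Theta(\mathfrak a_3(\kappa))={\bf221}+\kappa{\bf321}+{\bf331}$, corresponding to $e_2e_2=e_1$, $e_3e_2=\kappa e_1$, $e_3e_3=e_1$ (and $e_2e_3=0_V$).

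For the forward direction, given $\delta\in\F-\{0_\F\}$, I would use the fact that $\F$ is algebraically closed to choose $\kappa\in\F-\{0_\F\}$ with $\kappa^2=\delta^{-1}$, and propose the new ordered basis $v_1=e_1$, $v_2=\kappa e_3$, $v_3=e_2$. Since every product in $\mathfrak a(\delta)$ lands in $\F e_1=\F v_1$ and $v_1$ annihilates everything, it suffices to compute the four products of $v_2,v_3$: one finds $v_2v_2=\kappa^2\delta e_1=v_1$ (using $\kappa^2\delta=1_\F$), $v_3v_3=e_2e_2=e_1=v_1$, $v_3v_2=\kappa(e_2e_3)=\kappa e_1=\kappa v_1$, and $v_2v_3=\kappa(e_3e_2)=0_V$. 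Hence the structure vector of $\mathfrak a(\delta)$ relative to $(v_1,v_2,v_3)$ is precisely ${\bf221}+\kappa{\bf321}+{\bf331}=\Theta(\mathfrak a_3(\kappa))$, giving $\mathfrak a(\delta)\simeq\mathfrak a_3(\kappa)$. In the language of the $G$-action this reads $\Theta(\mathfrak a(\delta))g_\kappa=\Theta(\mathfrak a_3(\kappa))$, where $g_\kappa$ is the transition matrix whose columns express the $v_i$ in terms of the $e_i$, namely $g_\kappa=${\footnotesize$\begin{pmatrix}1_\F&0_\F&0_\F\\0_\F&0_\F&1_\F\\0_\F&\kappa&0_\F\end{pmatrix}$}.

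The converse is obtained by reversing the argument: given $\kappa\in\F-\{0_\F\}$, set $\delta=1_\F/\kappa^2\in\F-\{0_\F\}$, observe that then $\kappa^2=\delta^{-1}$, and apply $g_\kappa^{-1}$ to $\Theta(\mathfrak a_3(\kappa))$ (equivalently, run the same change of basis backwards), recovering $\Theta(\mathfrak a(\delta))={\bf221}+{\bf231}+\delta{\bf331}$. In both directions the stated relation $\kappa^2=\delta^{-1}$ is forced by requiring that the coefficients of ${\bf221}$ and ${\bf331}$ both equal $1_\F$.

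I do not expect any substantive obstacle: the only place the hypothesis that $\F$ is algebraically closed is used is the extraction of the square root $\kappa$ of $\delta^{-1}$, and the only subtlety to watch is the non-commutativity of these algebras (the asymmetry between $e_2e_3$ and $e_3e_2$). That asymmetry is exactly why one must \emph{swap} $e_2$ and $e_3$ (taking $v_2=\kappa e_3$, $v_3=e_2$) rather than merely rescale $e_3$, since the swap is what converts the ${\bf231}$ contribution into the ${\bf321}$ contribution required for $\mathfrak a_3(\kappa)$.
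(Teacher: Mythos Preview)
Your proposal is correct and follows essentially the same approach as the paper: the paper uses exactly the change of basis $e_1'=e_1$, $e_2'=\kappa e_3$, $e_3'=e_2$ with $\kappa$ a root of $x^2-\delta^{-1}$, records the same transition matrix $g_\kappa$, and obtains the converse by setting $\delta=1_\F/\kappa^2$ and applying $g_\kappa^{-1}$. Your write-up is slightly more detailed in verifying the four products and in explaining why the swap of $e_2$ and $e_3$ is needed, but the argument is the same.
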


\begin{remark}\label{Remark4.5}
Assume that ${\rm char}\,\F\ne2$.
We can make the following observations.

(i) Since distinct $\delta\in\F$ correspond to distinct (up to isomorphism) algebras $\mathfrak a(\delta)$, we see, for $\kappa_1,\kappa_2\in\F-\{0_\F\}$, that $\mathfrak a_3(\kappa_1)\simeq\mathfrak a_3(\kappa_2)$ if, and only if, $\kappa_2\in\{\kappa_1,-\kappa_1\}$.
[Since $\mathfrak a_3(\kappa)\simeq \mathfrak a(1_\F/\kappa^2)$ for each $\kappa\in\F-\{0_\F\}$, we get, (for $\kappa_1,\kappa_2\ne0_\F$), that $\mathfrak a_3(\kappa_1)\simeq\mathfrak a_3(\kappa_2)$ if, and only if, $1_\F/\kappa_1^2=1_\F/\kappa_2^2$ \
and this last holds if, and only if, $\kappa_1^2-\kappa_2^2=0_\F$.]
Compare also with Remark~\ref{rem:commentsOnTable1}(iii).

(ii) Let $\delta\in\F-\{0_\F\}$.
It is immediate from item (i) of this remark and Lemma~\ref{lemma_aDelta_sim_a3K} that $\mathfrak a(\delta)\simeq \mathfrak a_3(2\cdot 1_\F)\,(\simeq \mathfrak a_3(-2\cdot 1_\F))$ if, and only if, $\delta=(4\cdot1_\F)^{-1}$.
[Note that $2\cdot1_\F\ne0_\F$ and $4\cdot1_\F\ne0_\F$ since ${\rm char}\,\F\ne2$.
Moreover, we have that $(2\cdot1_\F)(2\cdot1_\F)=4\cdot1_\F=(-2\cdot1_\F)(-2\cdot1_\F)$.]
\end{remark}

\begin{corollary}\label{corol:aDelta_sim_a3K}
Assume that ${\rm char}\,\F\ne2$.
Let $\delta\in\F-\{0_\F, (4\cdot1_\F)^{-1}\}$.
Then $\mathfrak a(\delta)\simeq \mathfrak a_3(\kappa)$ for some $\kappa\in\F-\{0_\F, 2\cdot1_\F,-2\cdot1_\F\}$.
Conversely, if $\kappa\in\F-\{0_\F, 2\cdot1_\F,-2\cdot1_\F\}$, then $\mathfrak a_3(\kappa)\simeq \mathfrak a(\delta)$ for some $\delta\in\F-\{0_\F, (4\cdot1_\F)^{-1}\}$.
\end{corollary}

Recalling that $\mathfrak h(\beta=0_\F)=\mathfrak a_2\simeq\mathfrak a(\delta=0_\F)$ we get, in view of Lemmas~\ref{lemma2.11}, \ref{Lemma4.3} and Corollary~\ref{corol:aDelta_sim_a3K}, the following:

\begin{corollary}\label{Corol4.7}
Assume that ${\rm char}\,\F\ne2$.
If $\delta\in\F-\{(4\cdot1_\F)^{-1}\}$,
then $\mathfrak a(\delta)\simeq \mathfrak h(\beta)$ for some $\beta\in\F-\{1_\F,-1_\F\}$.
Conversely, if $\beta\in\F-\{1_\F,-1_\F\}$, then $\mathfrak h(\beta)\simeq\mathfrak a(\delta)$ for some $\delta\in\F-\{(4\cdot1_\F)^{-1}\}$.
\end{corollary}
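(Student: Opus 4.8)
The plan is to combine the two ``$\mathfrak a_3(\kappa)$-bridges'' already established, namely Lemma~\ref{Lemma4.3} (relating $\mathfrak h(\beta)$ and $\mathfrak a_3(\kappa)$) and Corollary~\ref{corol:aDelta_sim_a3K} (relating $\mathfrak a(\delta)$ and $\mathfrak a_3(\kappa)$), treating the degenerate parameter value $0_\F$ separately since both of these results are stated with the zero parameter excluded.

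For the forward direction, I would fix $\delta\in\F-\{(4\cdot1_\F)^{-1}\}$ and split into two cases. If $\delta=0_\F$, then by Remark~\ref{rem:commentsOnTable1}(iii),(iv) we have $\mathfrak a(\delta=0_\F)\simeq\mathfrak a_2=\mathfrak h(\beta=0_\F)$, and since ${\rm char}\,\F\ne2$ forces $0_\F\notin\{1_\F,-1_\F\}$, we may take $\beta=0_\F\in\F-\{1_\F,-1_\F\}$. If instead $\delta\in\F-\{0_\F,(4\cdot1_\F)^{-1}\}$, then Corollary~\ref{corol:aDelta_sim_a3K} yields $\kappa\in\F-\{0_\F,2\cdot1_\F,-2\cdot1_\F\}$ with $\mathfrak a(\delta)\simeq\mathfrak a_3(\kappa)$, and the converse part of Lemma~\ref{Lemma4.3} then produces $\beta\in\F-\{0_\F,1_\F,-1_\F\}$ with $\mathfrak a_3(\kappa)\simeq\mathfrak h(\beta)$; transitivity of $\simeq$ gives $\mathfrak a(\delta)\simeq\mathfrak h(\beta)$ with $\beta\in\F-\{1_\F,-1_\F\}$, as required.

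The converse direction is entirely symmetric. Fixing $\beta\in\F-\{1_\F,-1_\F\}$, if $\beta=0_\F$ then again $\mathfrak h(\beta=0_\F)=\mathfrak a_2\simeq\mathfrak a(\delta=0_\F)$, and since $(4\cdot1_\F)^{-1}\ne0_\F$ we may take $\delta=0_\F\in\F-\{(4\cdot1_\F)^{-1}\}$. If $\beta\in\F-\{0_\F,1_\F,-1_\F\}$, then the first part of Lemma~\ref{Lemma4.3} gives $\kappa\in\F-\{0_\F,2\cdot1_\F,-2\cdot1_\F\}$ with $\mathfrak h(\beta)\simeq\mathfrak a_3(\kappa)$, and the converse part of Corollary~\ref{corol:aDelta_sim_a3K} gives $\delta\in\F-\{0_\F,(4\cdot1_\F)^{-1}\}$ with $\mathfrak a_3(\kappa)\simeq\mathfrak a(\delta)$; chaining these yields $\mathfrak h(\beta)\simeq\mathfrak a(\delta)$ with $\delta\in\F-\{(4\cdot1_\F)^{-1}\}$.

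I do not expect any genuine obstacle beyond careful bookkeeping of the excluded parameter sets: the only separate input is the identification of the zero parameter via Remark~\ref{rem:commentsOnTable1}, needed precisely because both bridging results remove the zero value of the parameter. Lemma~\ref{lemma2.11} records exactly when two of the algebras $\mathfrak h(\beta)$ coincide up to isomorphism (governed by the classes $\overline{\beta}$) and so confirms the consistency of the $\mathfrak h(\beta)$-parametrization, but the existence assertions themselves follow from the two transitivity chains above.
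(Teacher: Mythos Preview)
Your proposal is correct and follows essentially the same approach as the paper: the paper's proof is the single sentence preceding the corollary, which invokes the identification $\mathfrak h(0_\F)=\mathfrak a_2\simeq\mathfrak a(0_\F)$ for the zero parameter and then Lemma~\ref{Lemma4.3} together with Corollary~\ref{corol:aDelta_sim_a3K} (plus a reference to Lemma~\ref{lemma2.11}) for the remaining values --- exactly the case split and $\mathfrak a_3(\kappa)$-bridge you spell out. Your remark that Lemma~\ref{lemma2.11} is not strictly needed for the existence statements is accurate; the paper lists it among the ingredients but the chain through $\mathfrak a_3(\kappa)$ already suffices.
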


For the rest of the discussion in this section we assume that ${\rm char}\,\F=2$.
Then $\gamma=-\gamma$ (equivalently, $2\cdot \gamma=0_\F$) for each $\gamma\in\F$.
Moreover, for $\omega\in\F$, we have that $\omega^2+1_\F=0_\F$ precisely when $\omega=1_\F$ under this assumption.
Thus, going through the proof of Lemma~\ref{Lemma4.3}, but now assuming that ${\rm char}\,\F=2$ instead and making the appropriate changes, we obtain the following result.

\begin{lemma}\label{Lemma4.8char2Lemma4.3}
Assume that ${\rm char}\,\F=2$.
If $\beta\in\F-\{0_\F,1_\F\}$, then $\mathfrak h(\beta)\simeq\mathfrak a_3(\kappa)$ for some $\kappa\in\F-\{0_\F\}$.
Conversely, if $\kappa\in\F-\{0_\F\}$, then $\mathfrak a_3(\kappa)\simeq\mathfrak h(\beta)$ for some $\beta\in\F-\{0_\F,1_\F\}$.
\end{lemma}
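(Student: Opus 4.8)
The plan is to adapt the proof of Lemma~\ref{Lemma4.3} to the characteristic~2 setting, reusing the matrix $g_\alpha$ defined there but tracking how its invertibility condition and the relevant exclusion sets collapse. First I would record the two characteristic-2 coincidences. Since $-1_\F=1_\F$, the polynomial $x^2+1_\F$ equals $(x+1_\F)^2$ and so has $1_\F$ as its only root; hence the set $\{0_\F,1_\F,-1_\F,\omega,-\omega\}$ from the proof of Lemma~\ref{Lemma4.3} collapses to $\{0_\F,1_\F\}$, and $g_\alpha\in\GlF$ precisely when $\alpha\in\F-\{0_\F,1_\F\}$. Likewise $2\cdot1_\F=0_\F$, so the parameter exclusion $\{0_\F,2\cdot1_\F,-2\cdot1_\F\}$ for $\kappa$ reduces to $\{0_\F\}$, matching the statement exactly.

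For the forward direction, given $\beta\in\F-\{0_\F,1_\F\}$ I would use that $\F$ is algebraically closed to write $\beta=\alpha^2$ for some $\alpha\in\F$ (recall $-\alpha^2=\alpha^2$ here). The conditions $\alpha\ne0_\F$ and $\alpha\ne1_\F$ are equivalent to $\beta\ne0_\F$ and $\beta\ne1_\F$, so $\alpha\in\F-\{0_\F,1_\F\}$ and $g_\alpha$ is invertible. The computation of Lemma~\ref{Lemma4.3} then gives $({\bf231}+\beta{\bf321})g_\alpha^{-1}={\bf221}+{\bf331}+(\alpha+\alpha^{-1}){\bf321}=\Theta(\mathfrak a_3(\kappa))$ with $\kappa=\alpha+\alpha^{-1}$, so $\mathfrak h(\beta)\simeq\mathfrak a_3(\kappa)$. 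It remains to check $\kappa\ne0_\F$, which follows since $\kappa=0_\F$ is equivalent to $\alpha^2+1_\F=0_\F$, i.e.\ $\alpha=1_\F$, a value already excluded.

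For the converse, given $\kappa\in\F-\{0_\F\}$ I would take $\alpha$ to be a root of $x^2+\kappa x+1_\F$, so that $\alpha\ne0_\F$ and, dividing by $\alpha$, $\kappa=\alpha+\alpha^{-1}$. One checks $\alpha\ne1_\F$ (otherwise $\kappa=0_\F$), hence $\alpha\in\F-\{0_\F,1_\F\}$ and $g_\alpha\in\GlF$. Running the computation of Lemma~\ref{Lemma4.3} in the other direction yields $({\bf221}+{\bf331}+\kappa{\bf321})g_\alpha={\bf231}+\alpha^2{\bf321}$, so $\mathfrak a_3(\kappa)\simeq\mathfrak h(\beta)$ with $\beta=\alpha^2$; here $\beta\ne0_\F$ since $\alpha\ne0_\F$, and $\beta\ne1_\F$ since $\beta=1_\F$ would force $\alpha=1_\F$. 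Thus $\beta\in\F-\{0_\F,1_\F\}$, as required.

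The only genuine obstacle is bookkeeping rather than new ideas: I must confirm that the two characteristic-2 coincidences—the collapse of $x^2+1_\F$ to a double root at $1_\F$ and the vanishing of $2\cdot1_\F$—together reduce the separate exclusion lists of Lemma~\ref{Lemma4.3} to the single conditions "$\beta\ne0_\F,1_\F$" and "$\kappa\ne0_\F$" stated here, and that none of the invertibility or nonvanishing steps borrowed from Lemma~\ref{Lemma4.3} secretly required ${\rm char}\,\F\ne2$. Once these checks are in place, the argument is a verbatim transcription of the previous proof.
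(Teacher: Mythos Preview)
Your proposal is correct and is exactly the approach the paper takes: the paper does not give a separate proof of Lemma~\ref{Lemma4.8char2Lemma4.3} but simply instructs the reader to rerun the proof of Lemma~\ref{Lemma4.3} under the assumption ${\rm char}\,\F=2$, and you have carried out precisely those bookkeeping adjustments (the collapse of $\{0_\F,1_\F,-1_\F,\omega,-\omega\}$ to $\{0_\F,1_\F\}$ and of $\{0_\F,\pm2\cdot1_\F\}$ to $\{0_\F\}$) in full detail.
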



\begin{corollary}\label{Corol4.9}
Assume that ${\rm char}\,\F=2$.
If $\delta\in\F$, we have that $\mathfrak a(\delta)\simeq\mathfrak h(\beta)$ for some $\beta\in\F-\{1_\F\}$.
Conversely, if $\beta\in\F-\{1_\F\}$, then $h(\beta)\simeq\mathfrak a(\delta)$ for some $\delta\in\F$.
\end{corollary}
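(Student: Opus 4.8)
The plan is to deduce this statement purely formally by chaining the isomorphisms already established, routing everything through the one-parameter family $\mathfrak a_3(\kappa)$. The two bridges I would use are Lemma~\ref{lemma_aDelta_sim_a3K}, which matches $\mathfrak a(\delta)$ with $\mathfrak a_3(\kappa)$ (for nonzero parameters, over any characteristic), and Lemma~\ref{Lemma4.8char2Lemma4.3}, which is the characteristic-$2$ matching of $\mathfrak a_3(\kappa)$ with $\mathfrak h(\beta)$ (again for nonzero parameters). Since both of these lemmas exclude the zero value of their parameters, the essential preliminary observation is that the degenerate case is covered separately by Remark~\ref{rem:commentsOnTable1}: parts~(iii) and~(iv) together give $\mathfrak h(0_\F)=\mathfrak a_2\simeq\mathfrak a(0_\F)$, and I would also note that in characteristic $2$ we still have $0_\F\ne1_\F$, so $0_\F\in\F-\{1_\F\}$.

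For the forward direction, given $\delta\in\F$ I would split on whether $\delta=0_\F$. If $\delta=0_\F$, then $\mathfrak a(\delta)=\mathfrak a(0_\F)\simeq\mathfrak h(0_\F)$ by the observation above, and $\beta=0_\F\in\F-\{1_\F\}$ does the job. If $\delta\in\F-\{0_\F\}$, then Lemma~\ref{lemma_aDelta_sim_a3K} yields $\mathfrak a(\delta)\simeq\mathfrak a_3(\kappa)$ for some $\kappa\in\F-\{0_\F\}$, and the converse part of Lemma~\ref{Lemma4.8char2Lemma4.3} then gives $\mathfrak a_3(\kappa)\simeq\mathfrak h(\beta)$ for some $\beta\in\F-\{0_\F,1_\F\}\subseteq\F-\{1_\F\}$; composing the two isomorphisms gives the claim. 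The converse direction is the mirror image: given $\beta\in\F-\{1_\F\}$, if $\beta=0_\F$ I again invoke $\mathfrak h(0_\F)\simeq\mathfrak a(0_\F)$ with $\delta=0_\F\in\F$, while if $\beta\in\F-\{0_\F,1_\F\}$ the forward part of Lemma~\ref{Lemma4.8char2Lemma4.3} gives $\mathfrak h(\beta)\simeq\mathfrak a_3(\kappa)$ for some $\kappa\in\F-\{0_\F\}$, and the converse part of Lemma~\ref{lemma_aDelta_sim_a3K} then produces $\mathfrak a_3(\kappa)\simeq\mathfrak a(\delta)$ for some $\delta\in\F-\{0_\F\}\subseteq\F$.

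There is no genuine computational obstacle here, since all the structure-constant calculations have already been absorbed into Lemmas~\ref{lemma_aDelta_sim_a3K} and~\ref{Lemma4.8char2Lemma4.3}; the proof is essentially a transitivity-of-isomorphism argument. The only point requiring care is bookkeeping of the excluded parameter values: both intermediate lemmas are stated only for nonzero $\kappa$, so the $\delta=0_\F$ and $\beta=0_\F$ cases genuinely must be handled by hand via Remark~\ref{rem:commentsOnTable1}, and one must check that the target parameter produced always lands in the stated set (here the crucial simplification relative to the characteristic-$\ne2$ Corollary~\ref{Corol4.7} is that in characteristic $2$ no extra exceptional value such as $(4\cdot1_\F)^{-1}$ is excluded, so $\delta$ ranges over all of $\F$). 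I would close by observing that each of the two displayed chains of isomorphisms is a straightforward composition, requiring no further verification.
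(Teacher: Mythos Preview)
Your proposal is correct and follows essentially the same route as the paper's own proof: handle the $\delta=0_\F$ (equivalently $\beta=0_\F$) case separately via Remark~\ref{rem:commentsOnTable1}(iii),(iv), and for the nonzero parameters chain Lemma~\ref{lemma_aDelta_sim_a3K} with Lemma~\ref{Lemma4.8char2Lemma4.3} through the intermediate family $\mathfrak a_3(\kappa)$. The paper's argument is organized in exactly this way, with the same three observations combined at the end.
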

\begin{proof}
First, recall from Remark~\ref{rem:commentsOnTable1}(iii),(iv) that $\mathfrak h(\beta=0_\F)=\mathfrak a_2\,(\simeq\mathfrak a(\delta=0_\F))$.
Now let $\delta\in\F-\{0_\F\}$.
From Lemma~\ref{lemma_aDelta_sim_a3K} (for which we have no restriction on ${\rm char}\,\F$), we get $\mathfrak a(\delta)\simeq\mathfrak a_3(\kappa)$ for some $\kappa\in\F-\{0_\F\}$, and under this last assumption for $\kappa$, we have that $\mathfrak a_3(\kappa)\simeq\mathfrak h(\beta)$ for some $\beta\in\F-\{0_\F,1_\F\}$ in view of Lemma~\ref{Lemma4.8char2Lemma4.3}.
Next we assume that $\beta\in\F-\{0_\F, 1_\F\}$.
Again from Lemma~\ref{Lemma4.8char2Lemma4.3}  we get that $h(\beta)\simeq\mathfrak a_3(\kappa)$ for some $\kappa\in\F-\{0_\F\}$.
Hence, $\mathfrak h(\beta)\simeq\mathfrak a(\delta)$ for some $\delta\in\F-\{0_\F\}$ by Lemma~\ref{lemma_aDelta_sim_a3K}.
The required result now follows easily by combining the three observations above.
\end{proof}

\section{The degeneration picture inside the set $\mathcal N$}

We first observe that the defining conditions for an algebra $\mathfrak g\in\boldsymbol A$ to be nilpotent or associative are equivalent to certain systems of polynomial equations in the coefficients of $\Theta(\mathfrak g)\in\boldsymbol\Lambda$.
In particular, if $\Theta(\mathfrak g)=(\lambda_{ijk})$, the condition for associativity is equivalent to $\sum_{i=1}^3\lambda_{bci}\lambda_{aid}=\sum_{i=1}^3\lambda_{abi}\lambda_{icd}$ for $a,b,c,d\in\{1,2,3\}$, while, according to the definitions we have given in Section~\ref{sec:Prilim}, the condition for $\mathfrak g\in\boldsymbol A$ to be nilpotent of class at most 2 (resp., nilpotent of class at most 3) is equivalent to $\sum_{i=1}^3\lambda_{abi}\lambda_{icd}=0_\F$ for $a,b,c,d\in\{1,2,3\}$ (resp., $\sum_{j,k=1}^3\lambda_{abk}\lambda_{kcj}\lambda_{jde}=0_\F$ for $a,b,c,d,e\in\{1,2,3\}$).
It follows that the set $\mathcal N$ is an algebraic subset of~$\boldsymbol\Lambda$ which is clearly also $G$-invariant relative to the action of $G$ on $\boldsymbol\Lambda$ we are considering.
Similarly for the subsets of $\mathcal N$ corresponding to algebras which are nilpotent of class at most $r$, for each positive integer $r$.

\smallskip
It will be convenient at this point to define the subsets $\mathcal C$ and $\mathcal M^{**}$ of $\boldsymbol\Lambda$ by
\begin{gather*}
\Theta^{-1}(\mathcal C)=\{\mathfrak g\in\boldsymbol A\colon [u,v]_{\mathfrak g}=[v,u]_{\mathfrak g}\ \mbox{for all\ } u,v\in V\},\\
\Theta^{-1}(\mathcal M^{**})=\{\mathfrak g\in\boldsymbol A\colon [u,u]_{\mathfrak g}\in\F\mbox{-span}(u)\ \mbox{for all\ } u\in V\}.
\end{gather*}
Then both $\mathcal C$ and $\mathcal M^{**}$ are $G$-invariant Zariski-closed subsets of $\boldsymbol\Lambda$,
(see~\cite[Sections~3.1 and~6.1]{PallikarosWard2020}).

\begin{result}\label{result:2.18}(See~\cite[Lemmas~4.10, 5.3 and~5.4]{IvanovaPallikaros2019}, proved there over an arbitrary field.)


(i) $\overline{O(\Theta(\mathfrak l_1))}=O(\Theta(\mathfrak l_1))\cup\{{\bf0}\}$ \ and \ $\overline{O(\Theta(\mathfrak c_1))}=O(\Theta(\mathfrak c_1))\cup\{{\bf0}\}$

(ii) If $\boldsymbol\lambda\in\boldsymbol\Lambda-\mathcal M^{**}$, then $\Theta(\mathfrak c_1)\in\overline{O(\boldsymbol\lambda)}$.
\end{result}

\begin{remark}\label{Remark:5.2New}
(i) From Result~\ref{result:2.18}(ii) we get that $\mathfrak c_3\to\mathfrak c_1$ and $\mathfrak a(\delta)\to\mathfrak c_1$ for all $\delta\in\F$, since $\{\mathfrak c_3\}\cup \{\mathfrak a(\delta)\colon \delta\in\F\}\subseteq \boldsymbol A-\Theta^{-1}(\mathcal M^{**})$.

(ii) By examining nilpotency classes we can observe that none of the other algebras under consideration degenerates to $\mathfrak c_5$.
In particular, apart from algebra $\mathfrak c_5$ which is nilpotent of class 3, all the remaining ones are nilpotent of class at most 2.
\end{remark}

In obtaining the degeneration picture inside the set $\mathcal N$ there is a natural split between the cases ${\rm char}\,\F\ne2$ and ${\rm char}\,\F=2$.
These two cases, which as it turns out give rise to different results, are considered separately in Subsections~\ref{subsection5.1} and~\ref{subsection5.2} that follow.
Recall our running assumption that the field $\F$ is algebraically closed in the discussion of both these subsections.

\subsection{The case ${\rm char}\,\F\ne2$}\label{subsection5.1}

{\it For the whole of this subsection assume that ${\rm char}\,\F\ne2$.}

With this assumption on the characteristic of $\F$, the degeneration picture inside $\mathcal N$ can now be completed using Result~\ref{Result2.1} (special case $H=G$) together with Results~\ref{Result2.2} and~\ref{result:2.18}, Remark~\ref{Remark:5.2New} and the observations (i)--(iv) below.

\smallskip
(i) We can observe that $\mathcal C\cap\mathcal N=\{{\bf0}\}\cup O(\Theta(\mathfrak c_1))\cup O(\Theta(\mathfrak c_3))\cup O(\Theta(\mathfrak c_5))$.
In particular this set is algebraic as it is the intersection of two algebraic sets.

\smallskip
(ii) Set $\boldsymbol\lambda=\Theta(\mathfrak c_5)\,(={\bf112}+{\bf123}+{\bf213})\in\boldsymbol\Lambda$.
For each $t\in\F-\{0_\F\}$, define $g(t)=${\footnotesize$\begin{pmatrix}t&0_\F&0_\F\\t&1_\F&0_\F\\0_\F&0_\F&t\end{pmatrix}$}$\in\GlF$.
We can then easily compute $\boldsymbol\lambda g(t)=t^2{\bf112}+(2\cdot1_\F)\,t\,{\bf113}+{\bf123}+{\bf213}$, for $t\in\F-\{0_\F\}$.
The map $f\colon\F\to\boldsymbol\Lambda:$ $t\mapsto t^2{\bf112}+(2\cdot1_\F)\,t\,{\bf113}+{\bf123}+{\bf213}$ is continuous in the Zariski topology.
Set $S=\F-\{0_\F\}$.
Clearly $f(S)\subseteq O(\boldsymbol\lambda)$.
Invoking Result~\ref{Result1} we get that ${\bf123}+{\bf213}\,(=f(0_\F))\in\overline{f(S)}\subseteq\overline{O(\boldsymbol\lambda)}$.
We conclude that $\mathfrak c_5\to\mathfrak c_3$ (see Remark~\ref{rem:commentsOnTable1}(ii)).
In a similar manner, in~\cite[Example~2.3]{IvanovaPallikaros2022} (working in that paper over an arbitrary field of characteristic $\ne2$ for this particular example) it was shown that $\mathfrak a_3(\kappa=2\cdot1_\F)\to \mathfrak l_1$.
In view of Remark~\ref{Remark4.5} we get that $\mathfrak a(\delta=(4\cdot1_\F)^{-1})\to\mathfrak l_1$.

\smallskip
(iii) Combining Lemmas~\ref{Lemma_n4_are_non-isomorphic} and~\ref{lemma2.11} with Remark~\ref{rem:commentsOnTable1}(iv) and Corollary~\ref{Corol4.7}, we get that there is no degeneration between any two distinct members of the set $\{\mathfrak l_1\}\cup\{\mathfrak c_3\}\cup\{\mathfrak a(\delta)\colon \delta\in\F-\{(4\cdot1_\F)^{-1}\}\}$.
It now follows from Result~\ref{Result2.2}(i) that $\mathfrak a(\delta\ne(4\cdot1_\F)^{-1})\not\to\mathfrak c_5$ (as we have already observed considering nilpotency classes) and that $\mathfrak a(\delta\ne(4\cdot1_\F)^{-1})\not\to\mathfrak a(\delta=(4\cdot1_\F)^{-1})$.

\smallskip
(iv) In Lemma~\ref{Lemma3.2} (see also Remark~\ref{Remark4.5}(ii)) we have shown that $\mathfrak a(\delta=(4\cdot1_\F)^{-1})\not\to\Theta^{-1}({\bf231}+\xi{\bf321})$ whenever $\xi\in\F-\{-1_\F\}$.
Invoking Corollary~\ref{Corol4.7} we see that that there is no degeneration from $\mathfrak a(\delta=(4\cdot1_\F)^{-1})$ to any one of the members of the set $\{\mathfrak c_3\}\cup\{\mathfrak a(\delta)\colon \delta\in\F-\{(4\cdot1_\F)^{-1}\}\}$.
Finally, note that by using Result~\ref{Result2.2}(i)  we can also verify that $\mathfrak a(\delta=(4\cdot1_\F)^{-1})\not\to\mathfrak c_5$, a fact already obtained above.

\bigskip

\begin{center}
\begin{tikzpicture}
\matrix [column sep=7mm, row sep=5mm] {
\node (n5)  {$\mathfrak c_{5}$}; & &&& \\
\node (n3)  {$\mathfrak c_{3}$}; & &  \node (n4muNot1/4)  {$\mathfrak a(\delta\ne(4\cdot1_\F)^{-1})$}; & &  \node (n4mu1/4)  {$\mathfrak a(\delta=(4\cdot1_\F)^{-1})$};
 \\
& \node (n2)  {$\mathfrak c_1$}; & & \node (n1)  {$\mathfrak l_{1}$};  &
   \\
 & & \node (n0)   {$\mathfrak a_{0}$}; &  &
 \\
};
\draw[->, thin] (n5) -- (n3);
\draw[->, thin] (n3) -- (n2);
\draw[->, thin] (n4muNot1/4) -- (n2);
\draw[->, thin] (n4mu1/4) -- (n2);
\draw[->, thin] (n4mu1/4) -- (n1);
\draw[->, thin] (n1) -- (n0);
\draw[->, thin] (n2) -- (n0);
\end{tikzpicture}

{\footnotesize
\refstepcounter{pict}\label{Picture3DimDegenAssocCharNe2}
Picture~\ref{Picture3DimDegenAssocCharNe2}.
Degenerations of 3-dimensional nilpotent associative algebras over an algebraically closed field $\F$ of ${\rm char}\,\F\ne2$ (with $\delta\in\F$)
}
\end{center}

We thus obtain the above degeneration picture inside the set $\mathcal N$ (defined over an algebraically closed field of characteristic not equal to 2).
In the special case $\F=\mathbb C$ these results corroborate, by making use of different techniques for some of the arguments, the corresponding results in~\cite{IvanovaPallikaros2022}.

\subsection{The case ${\rm char}\,\F=2$}\label{subsection5.2}

{\it For the whole of this subsection assume that ${\rm char}\,\F=2$.}

For the discussion that follows it will be convenient to define the elements $\boldsymbol\lambda$,  $\boldsymbol\mu$,  $\boldsymbol\nu\in \boldsymbol\Lambda$ by $\boldsymbol\lambda={\bf231}+{\bf321}+{\bf332}$, $\boldsymbol\mu={\bf231}+{\bf321}+{\bf331}$ and $\boldsymbol\nu={\bf231}+{\bf321}$.
Note that $\Theta(\mathfrak c_5)\in O(\boldsymbol\lambda)$, $\Theta(\mathfrak c_3)\in O(\boldsymbol\mu)$ and $\boldsymbol\nu=\Theta(\mathfrak l_1)$ under the assumption we have on the characteristic.
(To see this, compare with the relations given in Table~\ref{Table3dimAssocClass}: Beginning with $\Theta(\mathfrak c_5)$, the structure vector of $\mathfrak c_5$ relative to the standard basis $(e_1,e_2,e_3)$ of $V$, consider the change of basis $e_1'=e_3$, $e_2'=e_2$ and $e_3'=e_1$.
Similarly, beginning with $\Theta(\mathfrak c_3)$ consider the change of basis $e_1''=e_1$, $e_2''=e_2+e_3$ and $e_3''=e_3$.)

We can thus now make the following observations --- note in particular the various differences with the ${\rm char}\,\F\ne2$ case considered above.

\medskip
(i) As before, the subset $\mathcal C\cap\mathcal N$ of $\mathcal N$ is algebraic.
However, under our present hypothesis on the characteristic of $\F$, we now have that $\mathcal C\cap\mathcal N=\{{\bf 0}\}\cup O(\Theta(\mathfrak c_1))\cup O(\Theta(\mathfrak l_1))\cup O(\Theta(\mathfrak c_3))\cup O(\Theta(\mathfrak c_5))=\mathcal N-\bigcup_{\delta\in\F}O(\Theta(\mathfrak a(\delta)))$.

\medskip
(ii) For each $t\in\F-\{0_\F\}$, define matrix $g(t)\in\GlF$ by $g(t)=${\footnotesize$\begin{pmatrix}t^2&t&0\\0&t&0\\0&0&t\end{pmatrix}$}.
Then $\boldsymbol\lambda g(t)={\bf231}+{\bf321}+{\bf331}+t\,{\bf332}$, with $t\in\F-\{0_\F\}$.
By similar argument as in Observation~(ii) of Subsection~\ref{subsection5.1}, we get that $\boldsymbol\mu\in\overline{O(\boldsymbol\lambda)}$.
We conclude that $\mathfrak c_5\to\mathfrak c_3$.

\medskip
(iii) Similarly, defining matrix $g(t)\in\GlF$ for each $t\in\F-\{0_\F\}$ by $g(t)=${\footnotesize$\begin{pmatrix}t&0&0\\0&1&0\\0&0&t\end{pmatrix}$}, we get that $\boldsymbol\mu g(t)={\bf231}+{\bf321}+t\,{\bf331}=\boldsymbol\nu+t\,{\bf331}$, which in turn gives that $\mathfrak c_3\to\mathfrak l_1$.

\medskip
(iv) Combining Lemmas~\ref{Lemma_n4_are_non-isomorphic} and~\ref{lemma2.11} with Corollary~\ref{Corol4.9}, we get that there is no degeneration between any two distinct members of the family $\mathfrak \{\mathfrak l_1\}\cup\{\mathfrak a(\delta)\colon \delta\in\F\}$.
Invoking the transitivity of degenerations (Result~\ref{Result2.2}(i)), together with the fact that $\mathfrak c_3\to\mathfrak l_1$ obtained above, we conclude that $\mathfrak a(\delta)\not\to\mathfrak c_3$ for any $\delta\in\F$.
Finally, $\mathfrak a(\delta)\to\mathfrak c_1$ and $\mathfrak a(\delta)\not\to\mathfrak c_5$ for any $\delta\in\F$, from Remark~\ref{Remark:5.2New}(i) and~(ii) respectively.

The Observations (i)--(iii) above together with Results~\ref{Result2.2} and~\ref{result:2.18} and Remark~\ref{Remark:5.2New}, give sufficient information in order to obtain the complete degeneration picture inside the set $\mathcal C\cap\mathcal N$.
Invoking Observation~(iv), we can thus  complete the degeneration picture inside the set $\mathcal N$.

\begin{center}
\begin{tikzpicture}
\matrix [column sep=7mm, row sep=5mm] {
& \node (n5)  {$\mathfrak c_{5}$};& &\\
&\node (n3)  {$\mathfrak c_{3}$}; & &  \node (n4)  {$\mathfrak a(\delta)$};
 \\
\node (n2)  {$\mathfrak l_1$}; & & \node (n1)  {$\mathfrak c_{1}$};
   \\
 &  \node (n0)   {$\mathfrak a_{0}$}; &
 \\
};
\draw[->, thin] (n5) -- (n3);
\draw[->, thin] (n3) -- (n2);
\draw[->, thin] (n3) -- (n1);
\draw[->, thin] (n4) -- (n1);
\draw[->, thin] (n1) -- (n0);
\draw[->, thin] (n2) -- (n0);
\end{tikzpicture}

{\footnotesize
\refstepcounter{pict}\label{Picture3DimDegenAssocChar=2}
Picture~\ref{Picture3DimDegenAssocChar=2}.
Degenerations of 3-dimensional nilpotent associative algebras over an algebraically closed field $\F$ of ${\rm char}\,\F=2$ (with $\delta\in\F$)
}
\end{center}

\end{document}